\newtheorem{theorem}{Theorem}[section] 
\newtheorem{lemma}[theorem]{Lemma}
\theoremstyle{definition}
\newtheorem{definition}[theorem]{Definition}
\theoremstyle{remark}
\newtheorem{remark}[theorem]{Remark}
\numberwithin{equation}{section}  
\newcommand{\field}[1]{\mathbb{#1}} 
\newcommand{\R}{\field{R}}
\newcommand{\N}{\field{N}} 
\newcommand{\C}{\field{C}}
\renewcommand{\P}{\field{P}}
\newcommand{\LL}{{\mathcal L}}
\newcommand{\ZZ}{{\field{Z}}}
\renewcommand{\AA}{{\mathcal A}}
\def\RS{\mathfrak R}
\newcommand{\supp}{\mathop{\rm supp}}
\newcommand{\const}{{\rm const}}
\renewcommand{\Re}{\mathop{\rm Re}}
\renewcommand{\Im}{\mathop{\rm Im}}
\let\myo\overline
\let\myt\widetilde
\def\mdeg{\operatorname{deg}}
\title{Asymptotics of type I Hermite--Pad\'e polynomials for semiclassical functions}
\author{Andrei Mart\'{\i}nez-Finkelshtein, Evguenii A.~Rakhmanov, and Sergey P.~Suetin}
\date{This paper is dedicated to the 70th Birthday of Ed Saff}
\begin{document}

\vspace{1cm} \maketitle

\begin{abstract}
Type I Hermite--Pad\'e polynomials for a set of functions $f_0, f_1, \dots , f_s$ at infinity, 
$Q_{n,0}$, $Q_{n,1}$, \dots, $Q_{n,s}$,  is defined by the  asymptotic condition
$$ 
R_n(z):=\bigl(Q_{n,0}f_0+Q_{n,1}f_1+Q_{n,2}f_2+\dots+Q_{n,s}f_s\bigr)(z) 
=\mathcal O \left( \frac1{z^{s n+s}}\right), \quad z\to\infty,
$$
with the degree of all $Q_{n,k}\leq n$. We describe an approach for finding the asymptotic zero distribution of these polynomials as $n\to \infty$ under the assumption that all $f_j$'s are semiclassical, i.e.~their logarithmic derivatives are rational functions. In this situation $R_n$ and $Q_{n,k}f_k$ satisfy the same differential equation with polynomials coefficients.

We discuss in more detail the case when $f_k$'s are powers of the same function $f$ ($f_k=f^k$); for illustration, the simplest non trivial situation of $s=2$ and $f$ having two branch points is analyzed in depth. Under these conditions, the ratio or comparative asymptotics of these polynomials is also discussed.

From methodological considerations and in order to make the situation clearer, we start our exposition with the better known case of Pad\'e approximants (when $s=1$).
\end{abstract}

\maketitle

\section{Introduction and main results}\label{s1}

\subsection{Type I Hermite--Pad\'e polynomials}\label{s1s1}

For  $s\in \N$ let $\bm  f =(f_0, f_1, \dots, f_s )$ be a vector of analytic functions defined by their Laurent expansions at infinity,
$$
f_k(z)= \sum_{m=0}^\infty \frac{f_{m,k}}{z^m}, \quad k=0,1,\dots, s.
$$
For an arbitrary $n\in\N$ the associated vector of \emph{type I Hermite--Pad\'e (HP) polynomials},
$Q_{n,0}$, $Q_{n,1}$, \dots, $Q_{n,s}$,  corresponding to $\bm f$, 
is defined by the following asymptotic condition:
\begin{equation} 
R_n(z):=\bigl(Q_{n,0}f_0+Q_{n,1}f_1+Q_{n,2}f_2+\dots+Q_{n,s}f_s\bigr)(z) 
=\mathcal O \left( \frac1{z^{s n+s}}\right), \quad z\to\infty,
\label{1.1a}
\end{equation}
where
$$ 
 Q_{n,k} \in \P_n, \quad k=0,1,\dots, s , \quad \text{and not all } Q_{n,k}\equiv 0.
$$
We denote by $\P_n$ the space of algebraic polynomials with complex coefficients and degree $\leq n$. Function $R_n$ defined in \eqref{1.1a} is called the \emph{remainder}. 

In order to avoid unnecessary complications we will assume hereafter that all $f_k(\infty)\ne 0$; with this condition, and since the division of \eqref{1.1a} by $f_0$ preserves this asymptotic relation, we can also assume without loss of generality that $f_0 \equiv 1$. We follow this convention in the rest of the paper. 
Nontrivial polynomials $Q_{n,k}$ satisfying \eqref{1.1a}  always exist but, in general, they  are not defined uniquely. To avoid an essential degeneration we will also require that functions $f_1, \dots, f_s$ are rationally independent.  

Construction \eqref{1.1a} for the case $f_k(z) = e^{k/z}$ was introduced by Hermite in 1858, who used it to prove that the number $e$ is transcendental. The particular case $s=1$ corresponds to the (diagonal) \emph{Pad\'e approximants}, and the general construction is called Hermite--Pad\'e (HP) approximation to a collection of functions $f_0, \dots, f_s$. It plays an important role in Analysis and has significant  applications in approximation theory, number theory, mathematical physics and other fields. For details and further references see~\cite{MR2963451, MR2963452, MR1478629, MR769985, MR2796829, Sta88, MR2247778}.

A  more general classes of type I HP polynomials may be introduced by allowing arbitrary distribution of powers of polynomials $Q_{n,k}$; in this paper we keep all their degrees equal (the so-called diagonal case); neither we consider the associated construction of type II Hermite--Pad\'e polynomials (see e.g.~\cite[Ch.~23]{Ismail05}, \cite[Chapter 4]{MR1130396}, as well as \cite{ABV,ACV,MR769985,VC} for definitions). There is a well known formal relationship between the type I and type II HP polynomials   \cite{MR769985,VC} that can be elegantly expressed in terms of the associated Riemann--Hilbert problem \cite{MR2006283}. However, this algebraic connection does not mean that an explicit relation between the asymptotic behavior of both classes of polynomials exists. At least, the authors ignore any rigorously proved result establishing this correspondence for the HP zero asymptotics for a sufficiently wide class of functions. It is worth pointing out also that the zero distribution of the type II Hermite--Pad\'e polynomials is currently intensively studied by A.~I.~Aptekarev and coauthors (see e.g.~\cite{AKV}).

The asymptotic theory  of the Pad\'e approximants is fairly well developed after the works of Stahl and Gonchar--Rakhmanov in the 80ies, see e.g.~\cite{Gonchar:84, Gonchar:87, MR88d:30004a, Stahl:86}. However, the analytic theory of the HP polynomials (in particular, the asymptotics of $Q_{n,k}$ and of  the remainder $R_n$ as $n\to \infty$) is still in its infancy. In particular, we lack any general analogue of the Gonchar--Rakhmanov--Stahl (GRS) theory for HP polynomials. Some situations are rather well understood, and some particular classes of functions $\bm f$ have been studied in depth. This is the case of  the so-called Markov--type functions (that is, when the coefficients $f_{m,k}$ in \eqref{1.1a}  are moments of positive measures on the real axis), and of functions with a ``small'' set of branch points.  In this paper we focus mainly on the second class and more particularly, on functions with  a finite number of branch points, see the definition below. 

For a polynomial $Q_n\in \P_n$ we denote by $\mu(Q_n)$ its normalized zero counting measure  
$$\mu(Q_n) = \frac 1n \,\sum_{k=1}^n\,\delta_{x_k} \quad \text {where}\quad Q_{n}(x) = c_n\,\prod _{k=1}^n(1-x/x_k), \quad c_n\neq 0 $$
(if $\deg Q_n < n$ we assume that $n - \deg Q_n $ zeros of $Q_n$ are at infinity, $x_k = \infty$). 
Thus, we associate with each polynomial $Q_{n,k}$ ($n\in \N$, $k=0,1, \dots , s$) defined by \eqref{1.1a} its normalized zero counting measure
\begin{equation}
\label{def_zero_counting}
\mu_{n,k}= \mu(Q_{n,k}) 
\end{equation}
and study the weak-* convergence of the sequences $\{\mu_{n,k} \}$ as $n\to \infty$.

This problem can be reformulated in the following terms. For a (finite, Borel, and positive) measure $\sigma$ on $\C$ we denote by
$$   
C^\sigma (z) =\int \frac{d \sigma(t)}{z-t}
$$
its Cauchy transform. Recall that the weak convergence of a sequence of measures $\mu_n$ to a measure $\mu$ implies the convergence of $C^{\mu_n}$ to $C^\mu$, both in the plane Lebesque measure $m_2$ and in $L^p$, $p>1$, on compact subsets of $\C$. 

Since 
\begin{equation}
C_{n,k}(z) := C^{\mu_{n,k}}(z)= \frac1n  \frac {Q'_{n,k}(z)}{Q_{n,k}(z)}, 
\label{Cauchy1}
\end{equation}
we are equivalently interested in the behavior of $C_{n,k}$ as $n\to \infty$. Hence, the proof of convergence of these sequences and the description of their limits constitute an extension of the theorem of Stahl to the context of HP polynomials. In this generality this problem is completely open, and even a reasonable approach to its solution is not clear. The most general results so far have been obtained for the case of the type II HP polynomials for two functions ($s=2$),  each one with two branch points \cite{AKV}, and even these results are a consequence of the strong asymptotics established using the Riemann-Hilbert method, which is a clear overkill if we are only interested in the zero asymptotics. 

Thus, here we restrict our attention to a specific class of functions, general enough to be interesting, but for which we can put forward a strategy for studying the weak asymptotics. Namely, for a fixed set $\AA = \{a_1,\dots,a_p\}$  of $p\geq 2$ distinct points let
\begin{equation}
\LL_{\AA}= \left\{f(z)=f(z;\bm \alpha):=\prod_{j=1}^p(z-a_j)^{\alpha_j}:\quad
\alpha_j\in\C\setminus\ZZ,\quad
\sum_{j=1}^p\alpha_j=0 \right\}.
\label{1.1}
\end{equation}
By $\LL$ we denote the union of all classes $\LL_{\AA}$ for all finite sets $\AA$. Observe that $\LL$ is a subclass of the so-called  \emph{semiclassical} family, see the definition below. Since  each $f\in \LL$ is regular at infinity,  for convenience we always fix its branch there as $f(\infty)=1$.

With this convention the following result holds true, which shows that we can associate with the HP polynomials for a vector of functions from $\LL$ a linear differential equation with polynomial coefficients, whose fundamental system of solutions  can be build using the Hermite--Pad\'e polynomials:
\begin{theorem}~\label{main}
For any vector $\bm f = (f_0\equiv 1, f_1, \dots, f_s)$ of rationally independent functions $ f_k\in \LL$, $k=1, \dots, s$, and any $n\in \N$, each of the following $s+1$ functions 
$$   Q_{n,0},  \; Q_{n,1}f_1,   \;  \dots, \;  Q_{n,s}f_s$$ 
defined by \eqref{1.1a} satisfies a linear differential equation 
\begin{equation}
\Pi_s(z)w^{(s)}+ \Pi_{s-1}(z)w^{(s-1) }+ \dots + \Pi_1(z)w'+\Pi_0(z)w = 0,
\label{Equ1}
\end{equation}
with polynomial coefficients $\Pi_k(z) = \Pi_{k,n}(z)$ depending on $n$, whose degrees are jointly bounded  
by a value depending only on the number of branch points of the component  $f_k$ of $\bm f$.   
\end{theorem}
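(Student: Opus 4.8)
The plan is to use semiclassicality to trap all the functions in \eqref{1.1a} inside a single finite--dimensional space stable under $d/dz$, and then to extract from the Hermite--Pad\'e conditions the cancellations that keep the coefficient degrees bounded.

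I would start from the Pearson (semiclassical) form of the data: for $f_k\in\LL$ the logarithmic derivative $f_k'/f_k=\sum_j \alpha_j^{(k)}/(z-a_j^{(k)})$ is rational with poles only at the branch points of $f_k$, so clearing denominators yields polynomials $A_k,B_k$ with $B_k f_k'=A_k f_k$, where $B_k=\prod_j(z-a_j^{(k)})$ and $\deg A_k,\deg B_k$ are bounded by the number of branch points of $f_k$ and, crucially, are independent of $n$. In particular each summand $g_k:=Q_{n,k}f_k$ has rational logarithmic derivative and satisfies the first--order relation $B_kQ_{n,k}\,g_k'=(B_kQ_{n,k}'+A_kQ_{n,k})\,g_k$.

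Because $1,f_1,\dots,f_s$ are rationally independent, they form a basis of a $\C(z)$--module $M$, and the Pearson equations show $M$ is stable under differentiation: in this basis $d/dz$ acts on coordinates by $c_k\mapsto c_k'+(A_k/B_k)c_k$. Both $R_n$ and every $g_k$ lie in $M$, so $R_n$ together with its successive derivatives becomes linearly dependent over $\C(z)$ after finitely many steps. Writing this dependence explicitly — equivalently forming the quotient $W(g_0,\dots,g_s,w)/W(g_0,\dots,g_s)$ of Wronskians, in which the branch factors $\prod_k f_k$ cancel and leave genuinely rational coefficients — produces one linear equation of the form \eqref{Equ1} that annihilates $R_n$ and all the $g_k$ at once; its coefficients $\Pi_k$ are, up to a common polynomial factor, Wronskian--type minors built from the $Q_{n,j}$ and the $f_j$.

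The only nonformal step, and the main obstacle, is to bound $\deg\Pi_k$ independently of $n$. Taken literally these minors have degree growing linearly in $n$, since the $Q_{n,k}$ lie in $\P_n$; what has to be proved is that the interpolation conditions $R_n=\mathcal O(z^{-sn-s})$ force the top, $n$--dependent homogeneous parts of the minors to cancel, leaving polynomials whose degrees depend only on $\deg A_k$ and $\deg B_k$, i.e.\ on the branch points. I would carry this out by converting the $sn+s$ vanishing conditions into linear relations among the leading coefficients of the $Q_{n,k}$ — the biorthogonality relations underlying \eqref{1.1a} — and verifying that exactly these relations annihilate the highest homogeneous components of the minors. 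This is the precise analogue of the structure (``ladder'') relations through which one obtains, for semiclassical orthogonal polynomials, a differential equation whose coefficients have $n$--independent degree; transporting that mechanism to the full $(s+1)$--term Wronskian is where the difficulty concentrates, and where the sharp order $sn+s$ in \eqref{1.1a} is essential. Once the cancellation is in hand, reading off the degree of each surviving $\Pi_k$ in terms of $\deg A_k,\deg B_k$ yields the stated bound and finishes the proof.
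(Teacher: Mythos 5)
Your first two paragraphs reproduce, in different language, exactly the paper's construction: the ODE comes from the identical vanishing of the $(s+2)\times(s+2)$ Wronskian-type determinant built from $w$ and the $s+1$ functions $Q_{n,0},\,Q_{n,1}f_1,\dots,Q_{n,s}f_s$, expanded along its first row, and the Pearson relations are used just as you say --- one multiplies the expansion by $\prod_{k} A_k^{s}/f_k^{s}$, which turns every coefficient into a single-valued function with no branching, the relations $A_k^j f_k^{(j)}=B_{k,j}f_k$ guaranteeing polynomiality. Up to that point your proposal is sound and coincides with the paper.

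The genuine gap is in your third paragraph, and it is precisely the step you yourself flag as ``the main obstacle'': you leave the $n$-independent degree bound as an unexecuted plan (cancellation of the top homogeneous parts of the minors, to be verified via biorthogonality/ladder relations among the leading coefficients of the $Q_{n,k}$), whereas the paper disposes of it with a single row operation. Since $f_0\equiv1$ and $R_n=Q_{n,0}+\sum_{k\geq1}Q_{n,k}f_k$, adding all rows of each minor $W_j$ to one of them replaces that row by $\bigl(R_n,R_n',\dots\bigr)$ without changing the determinant; the defining condition \eqref{1.1a}, $R_n=\mathcal O\bigl(z^{-sn-s}\bigr)$, then makes every term in the Leibniz expansion of the minor of size $z^{sn}\cdot z^{-sn-s-\cdots}$, i.e.\ $W_j(z)=\mathcal O\bigl(z^{-c(s)+j}\bigr)$ with $c(s)$ independent of $n$ --- this is \eqref{asymptW}, and \eqref{2.2}--\eqref{2.4} is the same computation for $s=1$. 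After multiplication by $\prod_k A_k^{s}f_k^{-s}$ the coefficients are entire apart from possible poles at the finitely many branch points, which the factors $A_k^{s}$ clear, and they grow at most polynomially at infinity; by Liouville they are therefore polynomials of degree bounded in terms of $\sum_k\deg A_k$ alone. No bookkeeping of leading coefficients and no biorthogonality is needed: the decay of the remainder row performs the cancellation automatically, and this is the only place where the order $sn+s$ in \eqref{1.1a} enters. As written, your route might in principle be completable, but converting $sn+s$ interpolation conditions into explicit relations annihilating the top homogeneous parts of $(s+1)\times(s+1)$ minors amounts to re-deriving this determinant identity coefficient by coefficient; without the row-reduction observation the proof is not complete.
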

It follows from the definition that the reminder  $R_n$ defined by \eqref{1.1a} is also a solution of the equation \eqref{Equ1}. 

Linear  ordinary differential equations (ODE) with polynomial coefficients are one of the central topics in classical analysis, and in particular, the problem of existence of polynomial solutions for a given equation is well known. For a second order equations these polynomial solutions are called the \emph{Heine--Stieltjes (HS) polynomials} (see \cite[\S~6.8]{szego:1975}).
For relatively recent important developments in the theory of  HS polynomials see, e.g.~\cite{MR2770010,MR2647571,Shapiro2008a}, and also  \cite{Fedoryuk:1991,MR2902190, MR2377689, MR2759458, MR2897019} for the special case of the so-called Heun polynomials.

From this perspective, Theorem \ref{main} can be viewed as a construction of a class of higher order differential equations, which {\it a priori} have   polynomial solutions, as well as a fundamental system of ``quasi-polynomial''  solutions of the form $Q_{n,k}f_k$, for $f_k \in \LL$, that constitute a direct generalization of the HS polynomials.

The proof of Theorem \ref{main} is presented in Section~\ref{proofmain}. It is based on identities for the Wronskians (method which goes back to Riemann and Darboux, see \cite{MR748895} for further references) and strongly relies on the fact that any function $f\in\LL$ is \emph{semiclassical}: it satisfies the Pearson-type equation 
$$
\frac{f'}{f}(z)=\sum_{j=1}^p\frac{\alpha_j}{z-a_j},
$$
or in other words, $f$ is a solution of the first order differential equation $Af'-Bf=0$ where the roots of the polynomial $A$ are the branch points of $f$ and $\deg B \leq \deg A -2$. This method is well known for Pad\'e polynomials \cite{MR748895, MR891770}. 

For a special choice of the vector $\bm f$, which goes back again to the original ideas of Hermite, we can be more specific. Namely, let $f\in \LL$; a vector of the form
\begin{equation}
\bm f  = \left(1, f, f^2, \dots, f^s\right) 
\label{N}
\end{equation}
is a particular instance of the so-called \emph{Nikishin system} (of functions).  Condition \eqref{1.1a} defining 
the type I  Hermite--Pad\'e polynomials  for $\bm f$ now takes the form 
\begin{equation}
R_n(z):=\bigl(Q_{n,0}+Q_{n,1}f+Q_{n,2}f^2+\dots+ Q_{n,s}f^s \bigr)(z) 
=\mathcal O\left(\frac1{z^{sn+s}}\right),\quad z\to\infty,
\label{1.1N}
\end{equation}
with the assumptions that all $\deg Q_{n,k}\leq n$ and not all $Q_{n,k}$ are $\equiv 0$.

Obviously, Theorem \ref{main} applies in this case too, so we get an ODE of the form \eqref{Equ1} for the Nikishin system \eqref{N}. However, even in this situation the number of accessory parameters in the coefficients of the equation \eqref{Equ1} makes the problem virtually intractable. Thus, we  consider in more detail the case $s=2$, i.e., 
\begin{equation}
\bm f  = \left(1, f, f^2\right) .
\label{N2}
\end{equation}
Let $\AA = \{a_1,\dots,a_p\}$, $p\geq 2$, be the set of the pairwise distinct branch points of $f$, so that $f\in \mathcal L_\AA$.  
For the sake of simplicity, we will  assume initially that $\deg Q_{n,k} = n$ for all $k\in \{0,1,2 \}$,  and that the order of the zero of $R_n$ at infinity is exactly $2n+2$. In the established terminology this means that such $n\in \N$ are {\it normal indices}. If $f$ is such that every $n\in \N$ is normal, we say that  the system \eqref{N} is \emph{perfect} (see \cite{DeLoLo12,FiLo11}).

\begin{theorem}~\label{t1}
Let $\AA = \{a_1,\dots,a_p\}$, $p\geq 2$, be a set of  pairwise distinct  points,  $f\in\LL_\AA$, and $Q_{n,k}$, $k=0,1,2$, be the type I Hermite--Pad\'e polynomials for the system  $\bm f=(1,f,f^2)$, under the assumption that these functions are rationally independent.

Then functions $  Q_{n,k}f^{k}$, $k=0,1,2$, constitute three independent solutions of the differential equation 
\begin{equation}
A^2Hw'''+A\{3(A'-B)H-AH'\}w''-3(n-1)(n+2)Fw'+2n(n^2-1)Gw\equiv0,
\label{1.8}
\end{equation}
where
\begin{equation}
\label{defAB}
A(z)=\prod\limits_{j=1}^p(z-a_j), \qquad B=Af'/f\in\P_{p-2}.
\end{equation}
If $n\in \N$ if normal, then $H=H_n(z)=z^{3p-6}+\dotsb\in\P_{3p-6}$, $F=F_n(z)=z^{5p-8}+\dots \in\P_{5p-8}$, and $G=G_n(z)=z^{5p-9}+\dots \in\P_{5p-9}$ are some polynomials dependent from $n$.
\end{theorem}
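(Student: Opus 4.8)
The plan is to realize \eqref{1.8} as the (normalized) Wronskian differential equation of the three functions $y_k:=Q_{n,k}f^{k}$, $k=0,1,2$, and then to pin down its coefficients by a local analysis at the branch points together with a degree count at infinity. First I would record that, since $1,f,f^{2}$ are rationally independent and (by normality) each $Q_{n,k}\not\equiv0$, the functions $y_0,y_1,y_2$ are linearly independent over $\C$; hence their Wronskian $\mathcal W=\mathcal W(y_0,y_1,y_2)$ is not identically zero and they form a fundamental system of the third order equation
$$\det\begin{pmatrix} w & w' & w'' & w''' \\ y_0 & y_0' & y_0'' & y_0''' \\ y_1 & y_1' & y_1'' & y_1''' \\ y_2 & y_2' & y_2'' & y_2''' \end{pmatrix}=0 ,$$
whose leading coefficient (that of $w'''$) is $\pm\mathcal W$. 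The remainder $R_n=y_0+y_1+y_2$ is then automatically a solution as well, and the asserted independence of the $Q_{n,k}f^{k}$ follows once this equation is identified with \eqref{1.8}.

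Next I would remove the transcendental factor using the semiclassical relation $Af'=Bf$. Writing $f'/f=B/A$, every derivative $y_k^{(m)}$ is $f^{k}$ times a rational function with poles only at the zeros of $A$; factoring $f^{0+1+2}=f^{3}$ out of each of the four determinants above, the equation acquires rational coefficients with poles confined to $a_1,\dots,a_p$ (this is the mechanism behind Theorem \ref{main} in the present case). To control these poles I would run a Frobenius analysis at each $a_j$. Because $f\sim(z-a_j)^{\alpha_j}$ there and $\alpha_j\notin\ZZ$, the solutions $y_0,y_1,y_2$ have the three distinct, non-resonant exponents $0,\alpha_j,2\alpha_j$, so $a_j$ is a regular singular point; a computation of the associated generalized Wronskians (taking into account that the exponent $0$ comes from the \emph{holomorphic} solution $Q_{n,0}$, which lowers the singularity of the lower order coefficients) shows that $\Pi_3$ vanishes to order exactly two and $\Pi_2$ to order exactly one at each $a_j$, while $\Pi_1$ and $\Pi_0$ remain regular. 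This produces the factor $A^{2}$ in the leading coefficient and shows that $H$, $F$ and $G$ are genuine polynomials.

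The second coefficient is then essentially free. By Abel's (Liouville's) formula the Wronskian of any three solutions of a third order equation $\Pi_3w'''+\Pi_2w''+\Pi_1w'+\Pi_0w=0$ obeys $\mathcal W'/\mathcal W=-\Pi_2/\Pi_3$. The pole count just made, together with the behaviour at infinity, gives $\mathcal W=c\,f^{3}H/A^{3}$ for a constant $c$, whence
$$\frac{\mathcal W'}{\mathcal W}=3\frac{f'}{f}+\frac{H'}{H}-3\frac{A'}{A}=\frac{3B}{A}+\frac{H'}{H}-3\frac{A'}{A},$$
and multiplication by $-\Pi_3=-A^{2}H$ reproduces $\Pi_2=A\{3(A'-B)H-AH'\}$ immediately. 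To obtain the degrees I would pass to infinity through the identity $\mathcal W(y_0,y_1,y_2)=\mathcal W(R_n,y_1,y_2)$ (replacing the first row $y_0$ by $R_n=y_0+y_1+y_2$ leaves the determinant unchanged) together with $R_n(z)=\mathcal O(z^{-2n-2})$: although a naive estimate only bounds $\mathcal W$ by $\mathcal O(z^{3n})$, the sharp decay of $R_n$ forces a cancellation and $\mathcal W$ in fact vanishes to order six at infinity, with $n$-independent leading behaviour. Hence $\deg\Pi_3=5p-6$, so $\deg H=3p-6$, and the same analysis of the remaining generalized Wronskians gives $\deg F=5p-8$ and $\deg G=5p-9$; in particular infinity is a regular singular point of \eqref{1.8}.

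It remains to identify the two $n$-dependent constants, which I would read off from the indicial equation at infinity. The solution space provides the exponents $n$ and $n-1$ (the two polynomial directions, using $\deg Q_{n,k}=n$ and the normality of $n$) and $-2n-2$ (the direction of the remainder $R_n$). With the leading terms of $\Pi_3=A^{2}H$ and of $\Pi_2$ already known---the latter contributing the coefficient $6$---the indicial polynomial at infinity equals $m(m-1)(m-2)+6m(m-1)+\kappa_1 m+\kappa_0$, where $\kappa_1$ and $\kappa_0$ are the leading coefficients of $\Pi_1$ and $\Pi_0$. Requiring its roots to be $n,\,n-1,\,-2n-2$ and matching the elementary symmetric functions forces $\kappa_1=-3(n-1)(n+2)$ and $\kappa_0=2n(n^{2}-1)$ (the sum of the roots being $-3$ provides an automatic consistency check on $\Pi_2$); dividing $\Pi_1$ and $\Pi_0$ by these constants leaves $F$ and $G$ monic of the stated degrees. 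The main obstacle is the degree collapse of the previous paragraph: proving that $\mathcal W$, a priori of polynomial size $z^{3n}$, actually vanishes to order exactly six at infinity with nonzero leading coefficient. This is precisely where the defining Hermite--Pad\'e condition \eqref{1.1N} enters, through the basis change $y_0\mapsto R_n$ and the exact order $2n+2$ of vanishing of $R_n$; the nonvanishing of the resulting leading coefficient is the delicate point that genuinely relies on the normality of $n$.
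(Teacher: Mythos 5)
Your proposal is correct and takes essentially the same route as the paper: the $4\times4$ Wronskian identity for $w,\,Q_{n,0},\,Q_{n,1}f,\,Q_{n,2}f^2$, clearing the transcendental factor via $Af'=Bf$ (the paper multiplies by $A^5/f^3$), the Abel-type identity $\Pi_2=-\bigl(\frac{f^3}{A^5}\Pi_3\bigr)'\frac{A^5}{f^3}$ giving $A\{3(A'-B)H-AH'\}$ --- which is literally your Liouville formula $\mathcal{W}'/\mathcal{W}=-\Pi_2/\Pi_3$ with $\mathcal{W}=f^3H/A^3$ --- and the determination of the constants $-3(n-1)(n+2)$, $2n(n^2-1)$ from the two exponents $n$ and $-2n-2$ at infinity, your indicial polynomial with roots $n,\,n-1,\,-2n-2$ being exactly the paper's $2\times2$ linear system (cf.\ the $p=2$ computation) with the trace condition as the built-in consistency check. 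Your Frobenius analysis at the $a_j$ is a valid alternative justification of the factorizations $\Pi_3=A^2H$, $\Pi_2=A\{\cdot\}$ and of the polynomiality of $F,G$, and the ``degree collapse'' of $\mathcal{W}$ to order six at infinity that you flag as delicate is handled in the paper at the same level of detail, namely as an assertion following from the normality of $n$.
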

It follows by definition 
that the remainder  $R_n$ is also  a solution of ~\eqref{1.8}.

\begin{remark}
Although the electrostatic model for the zeros of Pad\'e polynomials (at least, in the case of the class $\mathcal L$) are well known (see Remark~\ref{remElectro} below), an analogue for HP polynomials, satisfying the third order ODE \eqref{1.8}, is currently an open problem.
\end{remark}

\medskip

Zero asymptotics of the HP polynomials is highly non-trivial (see e.g.~the results of the numerical experiments and their discussion in \cite{IKS}). In the situation of Theorems~\ref{main} and \ref{t1} its derivation is based on the substitution $w_n = \exp\left(  n \int^z v_n(t)dt  \right)$ (equivalently, $v_n = \frac 1n w'_n/w_n$), where $w_n$ is a solution of the ODE. This reduces the equation to a ``compact'' form (generalized Ricatti), for which all limit equations are purely algebraic. This argument can be carried through along convergent subsequences of functions $C_{n,k}$ (or equivalently, of weakly convergent subsequences of zero counting measures). However, for $p>2$ the existence of non-trivial set of accessory parameters  in the coefficients of the differential equation ($H$ and $G$ in the case of \eqref{1.8}) does not allow to show that the limit exists along the whole sequence $n\in \N$, without appealing to some deeper arguments. 

The situation in its full generality is so complex that we start our discussion in Section~\ref{s2} with the case of Pad\'e approximants ($s=1$). In this situation the accessory parameters still exist and constitute a problem, but the underlying Riemann surface governing the asymptotics is hyperelliptic, which makes the analysis if not simple but at least tractable. 

The second case that can be fully understood is of \eqref{N2} with $p=2$, that is, when  function $f$ has only 2 branch points. Without loss of generality they can be clearly taken to be $\pm 1$, so that we consider
\begin{eqnarray}
\label{falphan}
f(z)=f(z;\alpha)=\left(\frac{z-1}{z+1}\right)^{\alpha}, \quad 2\alpha\in\C\setminus\ZZ. 
\end{eqnarray}
Observe that $f$ can be extended to a holomorphic function in $\C\setminus [-1,1]$ satisfying $f(\infty)=1$.

For this function we consider 
the Hermite--Pad\'e polynomials of type I  for the system  $\bm f=(1,f,f^2)$, that satisfy
\begin{equation}
R_n(z):=\bigl(Q_{n,0}+Q_{n,1}f+Q_{n,2}f^2 \bigr)(z) 
=\mathcal O\left(\frac1{z^{2n+2}}\right),\quad z\to\infty,
\label{HPfor2and2}
\end{equation}
with the assumptions that all $\deg Q_{n,k}\leq n$ and not all $Q_{n,k}$ are $\equiv 0$.

Now the statement of Theorem \ref{t1} can be made more precise:
\begin{theorem}\label{coro:casep2}
Let  $f$ be the holomorphic branch of the function \eqref{falphan} at infinity, normalized by the condition $f(\infty)=1$, and let $Q_{n,k}$, $k=0,1,2$, be the type I Hermite--Pad\'e polynomials for the system  $\bm f=(1,f,f^2)$.
Then $Q_{n,0}$, $Q_{n,1}f$ and $Q_{n,2}f^2$ (and hence, also the remainder $R_n$) satisfy the same differential equation: 
\begin{align}                                                                                                                        
(z^2-1)^2w'''
&+6(z^2-1)(z-\alpha)w''\notag\\
&-\bigl[3(n-1)(n+2)z^2+12\alpha z-(3n(n+1)+8\alpha^2-10)\bigr]w'\notag\\
&+2\bigl[n(n^2-1)z+\alpha(3n(n+1)-8)\bigr]w=0.
\label{1.9}
\end{align}
Furthermore,  polynomial $Q_{n,2}$ is a solution of the differential equation 
\begin{align}                                                                                                                           
(z^2-1)^2w'''
&+6(z^2-1)(z+\alpha)w''\notag\\
&-\bigl[3(n-1)(n+2)z^2-12\alpha z-(3n(n+1)+8\alpha^2-10)\bigr]w'\notag\\
&+2\bigl[n(n^2-1)z-\alpha(3n(n+1)-8)\bigr]w=0.
\label{1.10}
\end{align}
\end{theorem}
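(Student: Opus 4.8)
The plan is to specialize Theorem \ref{t1} to $p=2$ and then pin down the finitely many remaining free coefficients by a local analysis at the three singular points $z=\pm1,\infty$. For $f$ as in \eqref{falphan} one has $A(z)=z^2-1$ and $B=Af'/f=2\alpha$, a constant (consistent with $B\in\P_{p-2}=\P_0$). Since $3p-6=0$, the polynomial $H\in\P_{3p-6}$ of Theorem \ref{t1} is forced to be the monic constant $H\equiv1$; substituting $A,B,H$ into \eqref{1.8} makes the two leading coefficients completely explicit and equal to $(z^2-1)^2$ and $6(z^2-1)(z-\alpha)$, matching \eqref{1.9}. Thus the only unknowns are the lower coefficients of $F=z^2+f_1z+f_0\in\P_2$ and $G=z+g_0\in\P_1$, i.e. the three accessory parameters $f_1,f_0,g_0$, which I would determine as follows.

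First I would read off the exponents of the equation at the two branch points. Near $z=1$ one has $f\sim\mathrm{const}\cdot(z-1)^{\alpha}$, so the solutions $Q_{n,0},Q_{n,1}f,Q_{n,2}f^2$ have local exponents $0,\alpha,2\alpha$; near $z=-1$, where $f\sim\mathrm{const}\cdot(z+1)^{-\alpha}$, they have exponents $0,-\alpha,-2\alpha$. The indicial polynomial of \eqref{1.9} at $z=\pm1$ is a cubic whose root $\rho=0$ is automatic, because $(z^2-1)^2$ has a double zero there while the coefficient of $w$ stays finite, so the indicial polynomial has vanishing constant term. Moreover the sum of the remaining two exponents comes out correctly as $\pm3\alpha$ for free, since that sum is governed by the residue of the already-fixed $w''$-coefficient, i.e. by $B$. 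Hence imposing only the correct product of exponents, $2\alpha^2$ at each point, yields one linear equation for $F(1)$ and one for $F(-1)$; together these determine $f_1$ and $f_0$.

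It remains to fix $g_0$, which enters no indicial equation, and for this I would use the resonance at infinity. Substituting $w=z^{\rho}$ into the leading terms of \eqref{1.9} gives the indicial polynomial at $\infty$ with roots $n,\,n-1,\,-2n-2$ (this also checks the degree $n$ of the $Q_{n,k}$ and the order $2n+2$ of the zero of $R_n$). Since the pair $n,n-1$ differs by $1$, the existence of the honest polynomial solution $w=Q_{n,0}=z^{n}+c_1z^{n-1}+\dots$ (equivalently, the absence of a logarithm) forces a compatibility condition: in the coefficient of $z^{n}$ the factor multiplying $c_1$ is exactly the indicial value at $n-1$, which vanishes, so the surviving terms must cancel. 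This is a single linear relation between $f_1$ and $g_0$, and since $f_1$ is already known it determines $g_0$. Carrying out these three elementary computations produces the explicit coefficients in \eqref{1.9}; because Theorem \ref{t1} guarantees that an equation of this form exists and the three necessary conditions above have a unique solution, the equation so obtained is the correct one.

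Finally, for \eqref{1.10} I would invoke a symmetry rather than repeat the computation. Dividing \eqref{HPfor2and2} by $f^2$ gives $R_n/f^2=Q_{n,2}+Q_{n,1}\widetilde f+Q_{n,0}\widetilde f^{\,2}=\mathcal{O}(z^{-2n-2})$, where $\widetilde f:=1/f=\left(\frac{z-1}{z+1}\right)^{-\alpha}$ is precisely \eqref{falphan} with $\alpha$ replaced by $-\alpha$ and $\widetilde f(\infty)=1$; the degrees and the order of the zero at infinity are preserved because $f^2(\infty)=1$. Hence $(Q_{n,2},Q_{n,1},Q_{n,0})$ is the vector of type I Hermite--Pad\'e polynomials for the system $(1,\widetilde f,\widetilde f^{\,2})$, with $Q_{n,2}$ now playing the role of the leading (polynomial) component $\widetilde Q_{n,0}$. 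Applying \eqref{1.9} to this system, that is replacing $\alpha$ by $-\alpha$, shows that $Q_{n,2}$ satisfies \eqref{1.10}, which is indeed \eqref{1.9} with $\alpha\mapsto-\alpha$. The main obstacle here is conceptual rather than computational: one must be certain that, for $p=2$, the accessory parameters left open by Theorem \ref{t1} are completely pinned down by purely local data, and the delicate point is the resonance at infinity, where requiring a genuine degree-$n$ polynomial solution (no logarithmic term) is what supplies the otherwise missing scalar relation for $g_0$.
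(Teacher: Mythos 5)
Your route is genuinely different from the paper's. The paper fixes $k_2=-3(n-1)(n+2)$ and $k_1=2n(n^2-1)$ by matching the top coefficients at infinity for the two solutions $Q_{n,0}$ and $R_n$ (its equations \eqref{A.2}--\eqref{A.3}), and then pins the remaining constants $d_2,c_2,c_1$ by transferring the equation to its $\alpha\mapsto-\alpha$ counterpart through the substitution $w=Q_{n,2}f^2$, using the Pearson data $B_0,B_1,B_2$ of $f^2$ (relations \eqref{A.6}--\eqref{A.7}) together with the parity $f(-z;-\alpha)=f(z;\alpha)$, evaluating at $z\to\infty$ and $z=0$. Your replacement --- products of indicial exponents at $z=\pm1$ plus the Frobenius resonance between the roots $n$ and $n-1$ at infinity --- is structurally sound, and I confirm its two delicate ingredients: the indicial cubic at $\pm1$ is $\rho\bigl[4(\rho-1)(\rho-2)+12(1\mp\alpha)(\rho-1)+\Pi_1(\pm1)\bigr]=0$, whose nonzero roots sum to $\pm3\alpha$ automatically (this sum rule is also what excludes the integer shifts $\alpha+m_1$, $2\alpha+m_2$, $m_i>0$, which you discard without comment --- add that one line, since a priori the exponents of $Q_{n,1}f$, $Q_{n,2}f^2$ are only known up to the vanishing orders of $Q_{n,k}$ at $\pm1$); and at infinity the coefficient of $z^n$ after substituting $Q_{n,0}=z^n+cz^{n-1}+\dots$ is $c\,P_\infty(n-1)+\bigl(-6\alpha n(n-1)+d_2n+c_1\bigr)$ with $P_\infty(n-1)=0$, so the resonance indeed supplies the missing scalar relation.

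The genuine problem is that you never execute the three computations, and executing them does \emph{not} reproduce \eqref{1.9} as printed. The product condition $2\alpha^2$ forces $\Pi_1(\pm1)=8\alpha^2\mp12\alpha+4$, hence $d_2=-12\alpha$ (agreeing with \eqref{1.9}) but constant term $3n(n+1)+8\alpha^2-2$, not $3n(n+1)+8\alpha^2-10$; the resonance then gives $c_1=6\alpha n(n+1)$, not $2\alpha(3n(n+1)-8)$. Your values, not the printed ones, are correct: for $n=1$ one computes directly $Q_{1,0}=-3\alpha z+2\alpha^2+1$, $Q_{1,1}=8\alpha^2-2$, $Q_{1,2}=3\alpha z+2\alpha^2+1$, and checks that $Q_{1,0}$, $f$ and $Q_{1,2}f^2$ all solve the equation with $\Pi_1=-12\alpha z+8\alpha^2+4$, $\Pi_0=12\alpha$, whereas \eqref{1.9} with $n=1$ applied to $w=Q_{1,0}$ leaves the residue $8\alpha(6\alpha z-4\alpha^2+1)\not\equiv0$. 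The source of the mismatch lies in the paper's garbled formula for $B_2$ in \eqref{3.9}/\eqref{A.8}: since $f^2=1-4\alpha/z+\dots$, one has $(f^2)'''\sim24\alpha z^{-4}$, so $B_2=24\alpha z^2-96\alpha^2z+64\alpha^3+8\alpha$ with leading coefficient $24\alpha$, while the paper's expression has leading coefficient $56\alpha$; inserting the correct $B_2$ into the paper's own relation \eqref{A.7} at $z\to\infty$ and at $z=0$ yields exactly $c_1=6\alpha n(n+1)$ and $c_2=3n(n+1)+8\alpha^2-2$, in full agreement with your local analysis. So your plan is a valid (and, as it turns out, diagnostically valuable) independent derivation, and the $\alpha\mapsto-\alpha$ argument for \eqref{1.10} coincides with the paper's; but your closing claim that the computations ``produce the explicit coefficients in \eqref{1.9}'' is unverified and false for the printed constants --- carrying the three linear steps through would have exposed the misprints, which propagate verbatim to \eqref{1.10} and which, being $O(1)$ and $O(n^2)$ corrections, leave the limit cubic \eqref{cubic} and all asymptotic conclusions untouched.
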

\begin{remark}
Observe that the ODE \eqref{1.10} is obtained from equation~\eqref{1.9} by replacing   $\alpha$ by $-\alpha$.  
\end{remark}

The absence of accessory parameters in the coefficients of the ODE in  Theorem~\ref{coro:casep2} allows us to obtain an explicit formula for the limit zero distribution of the HP polynomials. In order to formulate the assertion, it is convenient to introduce a different branch of the function $f$ from \eqref{falphan}; namely, let in what follows
\begin{eqnarray}
\label{falphNalt}
 f_0(z)=\left(\frac{1-z}{1+z}\right)^{\alpha}, \quad z\in \C\setminus F, \quad F=\overline \R\setminus (-1,1),
\end{eqnarray}
denote the holomorphic branch in $\C\setminus F$, fixed by  $ f_0(0)=1$.  
Function
$$
\rho_n(z)=Q_{n,1}(z) +2\cos(\alpha\pi) Q_{n,2}(z) f_0(z),
$$
defined and holomorphic in $\C\setminus F$, will play a prominent role in the convergence of HP approximants (see Theorem~\ref{t4} below). Meanwhile, observe that
$$
\rho_n(x)=Q_{n,1}(x) +2\cos(\alpha\pi) Q_{n,2}(x) \left( f^++f ^-\right)(x) , \quad x\in (-1,1),
$$
where $f^+$ (resp., $f^-$) are the boundary value of the function \eqref{falphan} on $(-1,1)$
from the upper (resp., lower) halfplane. 

We have:
\begin{theorem}~\label{t2}                         
Under the assumptions of Theorem~\ref{coro:casep2}, let $\alpha\in \R$, $|\alpha|\in(0,1/2)$. Then all the zeros of polynomials $Q_{n,k}$ belong to
$\R\setminus[-1,1]$, while $\rho_n$ has at least $2n+1$ zeros on $(-1,1)$.

Moreover, there exist two unit measures on $\R$, $\lambda$ supported on $E=[-1,1]$, and $\nu$ supported on $F=\overline \R\setminus (-1,1)$, such that the 
normalized zero counting measures $\mu_{n,k}$, $k=0, 1, 2$, defined in \eqref{def_zero_counting}, converge to $\nu$, while the normalized zero counting measure of $\rho_n$,
$$
\frac{1}{2n} \mu(\rho_n)=\frac{1}{2n}  \sum_{\rho_n(x)=0}\delta_x
$$
(with zeros accounted according to their multiplicity) converges to $\lambda$. 

Measures  $\nu$ and $\lambda$ are absolutely continuous,  with the densities
\begin{align}                                                                                                              
\nu'(x)& =\frac{\sqrt{3}}{2\pi}\,\frac{1}{\sqrt[3]{x^2-1}}\,\left(
\frac{1}{\sqrt[3]{|x|-1}}-\frac{1}{\sqrt[3]{|x|+1}}\right),\quad x \in  \overline \R\setminus (-1,1).
\label{1.91} 
\\
\label{er2}
\lambda'(x)&= 
\frac{\sqrt{3}}{4\pi}\,\frac{1}{\sqrt[3]{1-x^2}}\,\left(
\frac{1}{\sqrt[3]{1-x}}+ \frac{1}{\sqrt[3]{1+x}}\right), \quad x\in (-1,1).
\end{align}
\end{theorem}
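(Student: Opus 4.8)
The plan is to exploit the third-order ODE~\eqref{1.9}, whose independent solutions are $Q_{n,0}$, $Q_{n,1}f$, $Q_{n,2}f^2$ (Theorem~\ref{coro:casep2}), together with the semiclassical substitution $v_n=\frac1n w_n'/w_n$ advertised in the paper. Writing $w_n=\exp\!\big(n\int^z v_n\big)$ and retaining only the dominant, $n^3$, terms collapses \eqref{1.9} to the $\alpha$-independent algebraic relation $(z^2-1)^2v^3-3z^2v+2z=0$, a cubic defining a three-sheeted algebraic function on a Riemann surface $\RS$ over $\overline\C$. Its branches behave like $v\sim1/z$ (twice) and $v\sim-2/z$ at infinity; the last one matches the remainder $R_n$, which vanishes to order $2n+2$, while the first two account for the polynomial factors $Q_{n,k}$. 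Since for $p=2$ this curve carries no accessory parameters, $\RS$ is completely explicit, and the whole problem reduces to locating the limiting measures and reading off their densities.

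Before taking limits I would secure the localization of the zeros. For real $\alpha$ with $|\alpha|\in(0,1/2)$ one has $\cos(\alpha\pi)>0$, so $(1,f,f^2)$ forms an AT (Nikishin) system on $\R$ and \eqref{HPfor2and2} becomes a set of sign-definite orthogonality relations; the classical Markov--Nikishin argument then places every zero of $Q_{n,k}$ on $F=\overline\R\setminus(-1,1)$ and produces at least $2n+1$ sign changes of $\rho_n$ on $(-1,1)$. The link to the second measure is the boundary identity $\rho_n=(R_n^{+}-R_n^{-})/(f^{+}-f^{-})$ on $(-1,1)$, which realizes the zeros of $\rho_n$ as the points where $R_n$ has no jump, thereby tying $\lambda$ to the remainder (the $v\sim-2/z$ branch) on $E=[-1,1]$. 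This fixes $\supp\nu\subseteq F$, $\supp\lambda\subseteq E$ and, by weak-$*$ compactness on $\overline\R$, yields convergent subsequences of $\mu_{n,k}$ and of $\frac1{2n}\mu(\rho_n)$.

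Along such a subsequence, $C_{n,k}=\frac1n Q_{n,k}'/Q_{n,k}$ converges locally uniformly off the support to a limit that, away from the singular points $z=\pm1$, is assembled from the branches of $\RS$, with the a priori supports dictating which two sheets are glued across $F$ and across $E$. The limit $C^\nu$ is then pinned down as the function analytic in $\overline\C\setminus F$, odd (so that $\nu$ is symmetric), normalized by $C^\nu\sim1/z$, and with the prescribed local behavior at $\pm1$; likewise for $C^\lambda$ on $\overline\C\setminus E$. As the data contain no free parameters, this characterization has a unique solution, which I would exhibit in closed form using cube roots of $(z\mp1)$ and whose jump across the cut produces precisely \eqref{1.91} and \eqref{er2}. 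I would finish by checking directly that both densities are nonnegative, symmetric, of unit mass, and carry the $(|x|\mp1)^{-2/3}$ endpoint behavior; uniqueness of the limit then promotes subsequential convergence to convergence along the whole sequence $n\in\N$.

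The delicate point is the behavior at $z=\pm1$, where the leading coefficient $(z^2-1)^2$ of the ODE vanishes. There the naive WKB branches of $\RS$ have simple poles, and the ramification of the curve sits instead at $z=0,\pm1/\sqrt2$, whereas the true limiting transforms must be analytic off $F$ (resp.\ $E$) and display integrable cube-root singularities at $\pm1$. Reconciling these --- showing that the confluence of a turning point with the singular point $\pm1$ softens the WKB pole into the observed $(|x|\mp1)^{-2/3}$ density, and that the lower-order ($n^2$ and $n$) terms of \eqref{1.9}, which do depend on $\alpha$, do not move the support --- is the heart of the matter. The absence of accessory parameters is exactly what makes this local analysis decisive and lets the subsequential limits be upgraded to a genuine limit.
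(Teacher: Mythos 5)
Your overall strategy is precisely the paper's own: Riccati substitution $h_n=w'/(nw)$ in the ODE \eqref{1.9}, passage to the limit along weakly convergent subsequences to an $\alpha$-independent cubic, zero localization via orthogonality, identification of the branches by their behavior at infinity and by the supports, and Sokhotskii--Plemelj to read off the densities. But you miscompute the limit cubic, and the error is fatal to the rest. Dividing \eqref{1.9} by $n^3w$, the coefficient multiplying $h_n$ is $-\bigl[3(n-1)(n+2)z^2+12\alpha z-(3n(n+1)+8\alpha^2-10)\bigr]/n^2$; the constant term $3n(n+1)$ inside the bracket is of the \emph{same} order $n^2$ as the $z^2$ term, so in the limit the coefficient is $-3(z^2-1)$, not $-3z^2$. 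The correct limit equation is \eqref{cubic}, $(z^2-1)^2h^3-3(z^2-1)h+2z=0$, whose discriminant equals $-108(z^2-1)^4$: the curve ramifies \emph{only} at $z=\pm1$ (it is equivalent to $w^3=(z-1)/(z+1)$, cf.\ \eqref{defF} and Lemma~\ref{lemma1}), and its branches already carry the integrable $(z\mp1)^{-2/3}$ singularities that produce \eqref{1.91}--\eqref{er2} directly. Your cubic $(z^2-1)^2v^3-3z^2v+2z=0$ has discriminant $108\,z^2(z^2-1)^2(2z^2-1)$, which is where your claimed ramification at $z=0,\pm1/\sqrt2$ comes from; the entire ``delicate point'' of your last paragraph --- simple WKB poles at $\pm1$ that must be ``softened'' by a confluence of turning points --- is an artifact of this algebra slip, and your curve, with branch points inside $E$, cannot yield the stated densities at all. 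Note that both cubics share the $1/z,\,1/z,\,-2/z$ branch behavior at infinity (the difference affects only lower-order coefficients), which is why your consistency check at $\infty$ failed to detect the mistake.

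A secondary gap is the localization step. The sign-definite orthogonality on $E$ (the paper's relation \eqref{9}) does give the $2n+1$ zeros of $\rho_n$ on $(-1,1)$, and your identity $\Delta R_n=\rho_n\,\Delta f$ tying $\lambda$ to the jump of the remainder is exactly what the paper uses (see \eqref{15} and the argument with $w_n=\Delta R_n$). However, placing \emph{all} zeros of $Q_{n,k}$ on $F$ is not an off-the-shelf ``classical Markov--Nikishin argument'': it requires the second, varying-weight orthogonality obtained by deforming the contour in \eqref{10}--\eqref{12}, namely \eqref{14}, orthogonality of $Q_{n,2}$ on $F$ against $h_{n,\alpha}(y)=|(y-1)/(y+1)|^{\alpha}/\omega_{2n+1}(y)$, which is sign-definite on each component of $F^0$ precisely because $\deg\omega_{2n+1}=2n+1$ is odd; this interlacing of the two families of zeros is the step your sketch elides. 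Finally, the paper pins down $C^\nu=y_1$ by its reality on $(-1,1)$ (forced by $\supp\nu\subseteq F$) together with the normalization at infinity; your alternative via oddness of $C^\nu$ would itself need the symmetry $z\mapsto-z$, $\alpha\mapsto-\alpha$ combined with the $\alpha$-independence of the limit, which you assert but do not establish.
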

\begin{remark}
If $\alpha\in \C$, $|\alpha|\in(0,1/2)$, then not necessarily all zeros of $Q_{n,k}$ are real; however, the asymptotics \eqref{1.91}--\eqref{er2} remains valid.
\end{remark}

It follows from the results and methods of  \cite{MR3058747,MR3137137} that the  limit zero distributions  $\lambda$ and $\nu$  above 
can be characterized by an equilibrium problem involving mixed potentials, as follows. 
For a measure $\mu$ on $\C$ we denote by
$$
V^\mu(z)=\int \log\frac{1}{|z-t|}d\mu(t)
$$
its logarithmic potential. Furthermore,  for $F=\R\setminus (-1,1)$, let $g_F(z,t)$ be the  Green function for the domain $\Omega=\C\setminus{F}$ with pole at $t$; for a measure $\mu$ supported in $\Omega$,
$$
G^\mu_F(z)=\int_E g_F(z,t)\,d\mu(t)
$$
defines its Green potential (with respect to $\Omega$).
 
Then $\lambda$ in Theorem~\ref{t2} is the unique probability equilibrium measure supported on the interval $E=[-1,1]$ for the mixed Green-logarithmic potential,  characterized by the following identity:
\begin{equation}
3V^\lambda(x)+G^\lambda_F(x)\equiv\gamma_E=\const,\qquad x\in E.
\label{eq1}
\end{equation}
Furthermore,  $\nu$ in Theorem~\ref{t2} is the balayage of $\lambda$ from  $\Omega$ onto $F$ (see e.g.~\cite{Landkof:72} or \cite{Saff:97} for the definition of balayage). 

Equivalently, $\nu$ is the probability equilibrium measure for the mixed Green--logarithmic potential with respect to $E$, but now in the external field 
$$
\psi(x)=3g_E(x,\infty) = 3 \log\left(|x| + \sqrt{x^2-1}  \right), \quad x\in F,
$$
that is,  
\begin{equation}
3V^\nu(x)+G^\nu_E(x)+\psi(x)\equiv\gamma_F=\const,\qquad x\in F,
\label{eq2}
\end{equation} 
where again
$$
G^\nu_E(z)=\int_F g_E(z,t)\,d\nu(t),
$$
and $g_E(z,t)$ is the Green function of $\myo\C\setminus{E}$ with pole at $t$. The equivalence of such kind of problems (in a more general setting) was discussed in~\cite{BS2015}. It was also explained there that these problems are not a consequence of the vector equilibrium problems studied in \cite{Gonchar:81,Gonchar:85,Nik86} for two types of systems of Markov functions.

Equilibrium conditions above were used in ~\cite{MR3058747, MR3137137} to investigate the convergence of Hermite--Pad\'e approximants for a set of functions $1,f_1,f_2$ under the assumptions that the  pair $f_1,f_2$   forms  a complex Nikishin system for a so-called
 {\it Nuttall condenser} $(E,F)$, where the plate $E$ is a finite union of real intervals and the second plate $F$ exhibits a symmetry known as the $S$-property. 
In ~\cite{MR3137137}  (see also \cite{MR3088082}) it was shown that the Cauchy transform $h=C^\nu$ is a solution of a cubic equation, so that the Nuttall condenser can be associated with a three sheeted Riemann surface (see also \cite{KovSu14}).   

Finally, the problem of the \emph{strong asymptotics} of the HP polynomials 
seems to be open even in the simplest cases. For Pad\'e approximants ($s=1$) such formulas can be fairly easily obtained from the differential equation (see Section~\ref{s2}) using the Liouville--Green approximation \cite{Olver74b}. 
However, the Liouville--Green (a.k.a.\ Liouville-Steklov or WKB)  method is not yet completely developed for higher order differential equations  (for the case of two-point Pad\'e approximants see~\cite{KoSu14}), so it is not clear how to obtain the strong asymptotics of $Q_{n,2}$ using WKB even in the simplest case of \eqref{1.10} with all its coefficients explicit.

In the case of \eqref{falphan}--\eqref{HPfor2and2}, since for $|\alpha|\in(0,1/2)$  
the singularities of the function  \eqref{falphan} at $\pm1$ are integrable,    
polynomials $Q_{n,2}$ satify some {\it (bi)orthogonality}  relations on
$\myo\R\setminus(-1,1)$ with respect to the weight
$\bigl((x-1)/(x+1)\bigr)^{\alpha}$. It allows us to establish convergence of the ratio of HP polynomials, which as it follows from Theorem~\ref{t2}, will occur only on the complex plane cut along $F=\overline \R\setminus (-1,1)$. 
\begin{theorem}\label{t4}
Under the assumptions of Theorem~\ref{t2},  as $n\to\infty$, 
\begin{gather} 
\frac{Q_{n,1}}{Q_{n,2}}(z)\longrightarrow
-2\cos(\alpha\pi) \,  f_0(z) ,
\label{4.4}\\
\frac{Q_{n,0}}{Q_{n,2}}(z)\longrightarrow
f_0^{\,2}(z), 
\label{4.5}
\end{gather}
locally uniformly in  $\C\setminus F$.
\end{theorem}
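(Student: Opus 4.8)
The plan is to combine the weak zero asymptotics already secured in Theorem~\ref{t2} with two orthogonality relations and a short, exact algebraic argument that identifies the limits. \emph{First}, I would record an integral representation of the remainder. Since $|\alpha|\in(0,1/2)$, the branch $f$ in~\eqref{falphan} has integrable singularities at $\pm1$; the function $R_n$ is holomorphic in $\C\setminus E$ with $E=[-1,1]$, vanishes at infinity, and its boundary values on $(-1,1)$ obey $R_n^+-R_n^-=2i\sin(\alpha\pi)\,f_0\,\rho_n$, because $f^\pm=e^{\pm i\alpha\pi}f_0$ there and by the definition of $\rho_n$. The Sokhotski--Plemelj formula then gives
$$
R_n(z)=\frac{\sin(\alpha\pi)}{\pi}\int_{-1}^{1}\frac{f_0(t)\,\rho_n(t)}{t-z}\,dt,\qquad z\in\C\setminus E,
$$
and expanding the Cauchy kernel at infinity reproduces the orthogonality $\int_{-1}^1 t^{j}f_0\rho_n\,dt=0$, $j=0,\dots,2n$, i.e.\ the relations behind the $2n+1$ zeros of $\rho_n$ on $(-1,1)$. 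Dividing the defining identity~\eqref{HPfor2and2} by $Q_{n,2}$ reduces the theorem to two assertions: (i) the ratios $Q_{n,1}/Q_{n,2}$ and $Q_{n,0}/Q_{n,2}$ converge locally uniformly in $\C\setminus F$ along the full sequence, and (ii) $R_n/(Q_{n,2}f^{2})\to0$ locally uniformly off $F$.

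\emph{Second}, the engine for (i)--(ii) is the biorthogonality of $Q_{n,2}$ on $F$ with respect to the weight $\bigl((x-1)/(x+1)\bigr)^{\alpha}$, obtained by transporting the relations above through the Cauchy representation of $\rho_n$ across $F$. Because $|\alpha|<1/2$ this weight lies in the Szeg\H{o} class, so the (bi)orthogonal polynomials $Q_{n,2}$ and their functions of the second kind admit \emph{ratio} and Szeg\H{o}-type asymptotics, rather than merely the $n$th-root asymptotics of Theorem~\ref{t2}; this is precisely the refinement needed to pass from weak convergence to genuine locally uniform convergence of ratios. Concretely, this is the convergence of the Hermite--Pad\'e approximants driven by the mixed equilibrium problem~\eqref{eq1}--\eqref{eq2}, established in~\cite{MR3058747,MR3137137}: the Szeg\H{o}-class asymptotics make $\{Q_{n,k}/Q_{n,2}\}$ normal families (bounded on compacta of $\C\setminus F$), while the strictly positive Green-function gap in the equilibrium inequalities yields $R_n/(Q_{n,2}f^2)\to0$.

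\emph{Third}, granted (i)--(ii), the limits are pinned down by a jump argument. Along any subsequence let $Q_{n,1}/Q_{n,2}\to q_1$ and $Q_{n,0}/Q_{n,2}\to q_0$, both holomorphic in $\C\setminus F$ since the $Q_{n,k}$ vanish only on $F$. Writing $R_n/Q_{n,2}=f^{2}+(Q_{n,1}/Q_{n,2})f+(Q_{n,0}/Q_{n,2})$ and using (ii) after multiplying by the locally bounded $f^{2}$, we obtain $f^{2}+q_1 f+q_0\equiv0$ on $\C\setminus\R$. On the upper half-plane $f=e^{i\alpha\pi}f_0$ and on the lower $f=e^{-i\alpha\pi}f_0$, whereas $q_0,q_1,f_0$ are single-valued and holomorphic across $(-1,1)$; hence both $e^{\pm i\alpha\pi}f_0$ are roots of $w^{2}+q_1 w+q_0$, as identities in $\C\setminus F$ by analytic continuation. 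These roots are distinct because $2\alpha\notin\Z$, so Vieta's formulas force $q_1=-2\cos(\alpha\pi)f_0$ and $q_0=f_0^{\,2}$. The limits being independent of the subsequence, the full sequences converge, and a Vitali--Porter argument upgrades the convergence from $\C\setminus\R$ to all of $\C\setminus F$, which is exactly~\eqref{4.4}--\eqref{4.5}.

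The main obstacle is the analytic input of the second step: upgrading the weak asymptotics of Theorem~\ref{t2} to ratio and strong asymptotics for the (bi)orthogonal polynomials $Q_{n,2}$ on the \emph{unbounded} set $F$ with the two-level Nikishin coupling, together with the attendant leading-coefficient normalization that secures normality of the ratio families. Once this potential-theoretic and Szeg\H{o}-theoretic machinery is in place, the algebraic jump argument of the third step is short and exact, and it also explains why the answer must be expressed through the single-valued branch $f_0$ (holomorphic in $\C\setminus F$) rather than through $f$ itself, whose cut on $[-1,1]$ the ratios cannot reproduce.
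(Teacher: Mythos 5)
Your Step 3 (the Vieta/jump identification of the limits) is a clean alternative to the paper's endgame, and your Step 1 correctly reproduces the orthogonality behind the $2n+1$ zeros of $\rho_n$. But Step 2, which you yourself flag as ``the main obstacle,'' is a genuine gap, not a citable black box. The biorthogonality satisfied by $Q_{n,2}$ on $F$ is \emph{varying}: the weight is $h_{n,\alpha}(y)=|(y-1)/(y+1)|^{\alpha}/\omega_{2n+1}(y)$, where $\omega_{2n+1}$ collects the (unknown, $n$-dependent) zeros of $\rho_n$ on $(-1,1)$, and the support $F=\overline\R\setminus(-1,1)$ is unbounded. No off-the-shelf Szeg\H{o}-class theory yields ratio or strong asymptotics here, and the references you invoke (\cite{MR3058747,MR3137137}) establish weak/equilibrium results, not the Szeg\H{o}-type refinement your argument needs; the paper itself states that strong asymptotics for HP polynomials ``seems to be open even in the simplest cases.'' Note also that weak asymptotics alone cannot rescue normality of $\{Q_{n,k}/Q_{n,2}\}$: since $Q_{n,1}$ and $Q_{n,2}$ share the limit distribution $\nu$, the $n$th-root asymptotics of the ratio is identically $1$, which says nothing about local boundedness. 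So both pillars (i) and (ii) of your reduction are unproved.

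The paper's proof avoids exactly this: it never upgrades to strong asymptotics. From \eqref{15} it writes $Q_{n,1}/Q_{n,2}+\myt f=\rho_n/Q_{n,2}$ and uses only the $n$th-root data of Theorem~\ref{t2}, namely $\frac1n\log|\rho_n/Q_{n,2}|\to\Re\int^z(y_2-y_1)$, so that \eqref{4.4} reduces to the strict sign inequality \eqref{trajectories}, $\Re\int^z(y_2-y_1)(\zeta)\,d\zeta<0$ off $\R$. That inequality is the real analytic content, and it is proved via Nuttall's canonical partition of the three-sheeted surface $\RS_3$: since the cubic \eqref{cubicGeneral} does not depend on $\alpha$, one may set $\alpha=1/3$ and transfer Nuttall's explicit computation $\exp(\phi(z))=z(1-w)^3$ and the inequality \eqref{e30} from \cite{MR769985}. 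Your proposal contains no substitute for \eqref{trajectories}; the ``strictly positive Green-function gap'' you mention is the same unproved assertion in different clothing. Finally, the paper gets \eqref{4.5} for free from the symmetry $f(z;-\alpha)=1/f(z;\alpha)$, i.e.\ $Q^{(\alpha,-\alpha)}_{n,2}=Q^{(-\alpha,\alpha)}_{n,0}$, applying \eqref{4.4} with $-\alpha$ and dividing --- thereby sidestepping any need for normality of $Q_{n,0}/Q_{n,2}$, which your route requires but does not secure.
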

\begin{remark}\label{rem1}
Similar results for Markov functions have been proved recently in \cite{LM15b,LM15a,LM15c}.

For convenience, Theorems~\ref{t2} and \ref{t4} assumed that $|\alpha|\in(0,1/2)$, although the crucial constraint is that neither $\alpha$ nor $2\alpha$ are integers. Indeed, if $\alpha, 2\alpha\in\R\setminus\ZZ$, but $|\alpha|\not \in(0,1/2)$, then function $f_0^2$ (see the definition in~\eqref{falphNalt}) is no longer integrable on the interval $E$. By multiplying the defining relation \eqref{HPfor2and2} by a suitable real polynomial of a fixed degree, say $m$, we can reduce the problem to the previously analyzed situation of an integrable $f_0$, at the cost of loosing $m$ orthogonality conditions satisfied by $Q_{n,k}$ and $R_n$ (see Section~\ref{s4}). Although the arguments that lead to the proof of the limit zero distribution in Theorem~\ref{t4} still apply in this case (so \eqref{1.91} and \eqref{er2} are valid as long as $\alpha, 2\alpha\in\R\setminus\ZZ$), a finite number of the zeros of the polynomials $Q_{n,k}$ is now out of our control. In consequence,  instead of a uniform convergence as in Theorem~\ref{t4} we can claim~\eqref{4.4} and~\eqref{4.5} in capacity only.

We can extend these conclusions even further, to the case when $\alpha, 2\alpha\in\C\setminus\ZZ$; this is subject of a manuscript in preparation.
\end{remark}

The proof of Theorem~\ref{t4}, carried out in Section~\ref{s6}, relies on the existence of a Riemann surface with the so-called Nuttall's canonical partition (see \cite{MR769985}). As a consequence, we conclude that for the remainder $R_n$, defined by \eqref{HPfor2and2}, there exists the limit  $g(z) = \lim (1/n)\log|R_n(z)|$, which is a single-valued harmonic function on this Riemann surface with the prescribed behavior at infinity on each sheet of $\RS$ (what we call the \emph{real $g$-function} of the Riemann surface $\RS$):
\begin{definition}\label{def:gfunction}
Let $\RS$ be a $(s+1)$-sheeted compact Riemann surface, and $ \pi :\RS \to\overline\C$,  $\pi(\zeta) = z$, $\zeta \in \RS$, its canonical projection. For $z\in \C$ we use the notation $  \pi^{-1}(z) = \{\zeta^{(0)}, \zeta^{(1)}, \dots, \zeta^{(s)}\}$. The  \emph{real $g$-function} $g = g_{\RS}$ on $\RS$ is defined as a function harmonic on the finite part of $\RS$ and satisfying conditions
\begin{align*} 
g(\zeta) = & -s\log|z| + \mathcal O(1), \quad \zeta \to \infty^{(0)}, \\ 
g(\zeta) = & \log|z| + \mathcal O(1), \quad \zeta \to \infty^{(j)}, \quad j=1, \dots, s, 
\\
g(\zeta^{(0)})& +   g(\zeta^{(1)}) +\dots + g(\zeta^{(s)})  = 0,
\end{align*}
with 
$$  \pi^{-1}(\infty) = \{\infty^{(1)}, \dots, \infty^{(s+1)} \}.$$ 
Such a function exists and is unique. 

Furthermore, any function $G$ on $\RS$ such that $\Re G=g$ is the \emph{complex $g$-function} of $\RS$, and can be expressed as an Abelian integral of the third kind.
\end{definition}

In theory, the considerations above can be extended to the most general setting of HP approximants defined by \eqref{1.1N} or even \eqref{1.1a}, with $f_k\in \mathcal L$, by saying that the limit  of any convergent subsequence of $  (1/n)\log|R_n(z)|$ is the real $g$-function corresponding to certain 
$(s+1)$-sheeted  Riemann surface $\RS_{s+1}$ associated with $\bm f$. 
Moreover, all limits of all the Cauchy transforms \eqref{Cauchy1} along the same subsequence are rational functions on \emph{the same} Riemann surface $\RS_{s+1}$.

Finally, if $\RS_{s+1}$ can be uniquely prescribed by some additional conditions we conclude that the whole sequence  $(1/n)\log|R_n(z)|$ converges. Such a condition could be formulated in terms of the zero-level set $\{ g(z)=0\}$ and branch types of functions in $\bm f$. 

A complete implementation of the plan outlined above in the most general setting is rather a program for a future. Certain assertions needed for formal justification of this program are not completely  proved at the moment, and volume restrictions prevent us from presenting even the main details related to general case in this paper.  Instead, we discuss next a version of our approach for the case $s=1$, that is, for the Pad\'e approximants, situation still far from being trivial. As it was mentioned, the Riemann surfaces related to this case are hyperelliptic and this is an essential simplification. Also, all facts related to our method for $s=1$ can be obtained as corollaries of the Stahl's theorem. However,  we intend to extend the method to the study of the Hermite--Pad\'e polynomials where we lack any analogue of the Stahl's theorem. It turns out that without recurring to this theorem even the case of Pad\'e polynomials presents challenges.

\begin{remark}
These results were in part announced in~\cite{MR3088082}. We wish to acknowledge  the useful remarks of A.~B.~J.~Kuijlaars on the first version of this manuscript, as well as of the anonymous referees.
\end{remark}

\section{Pad\'e  polynomials}\label{s2}

We start the discussion illustrating our approach in the simplest situation, when $s=1$ and $\bm f= (1,f)$ in \eqref{1.1a}. This corresponds to the 
\emph{Pad\'e approximants} to the function $f$, or rather, to its analytic germ at infinity; recall that without loss of generality we assume that $f(\infty)=1$, so that
 \begin{equation}
 \label{germ}
 f(z)= 1+ \sum_{m=1}^\infty \frac{c_{m}}{z^m}.
 \end{equation}
It is customary to use for the polynomials defined in \eqref{1.1a}  the notation $P_n= -Q_{0,n}$ and $Q_n=Q_{1,n}$, so that we have the conditions $ P_n,  Q_n\in \P_n$, $Q _n\not\equiv0$, and
\begin{equation}
R_n(z)=\bigl(Q_n f -P_n\bigr)(z)=\frac{M_n}{z^{n+1}}\left( 1 + \mathcal O\left(\frac1{z}\right)\right),\quad z\to\infty.
\label{Pade}
\end{equation}
Polynomials $P_n$ and $Q_n$ (not uniquely defined  by \eqref{Pade}), are called the \emph{Pad\'e polynomials} ($Q_n$ are Pad\'e \emph{denominators}), and the rational function $ P_n/Q_n$ (which is uniquely defined) is the \emph{Pad\'e approximant} to $f$ of order $n$. Recall that an index $n$ is called \emph{normal} if $\deg Q_n = n$ and $M_n\neq 0$ in \eqref{Pade}.  

Pad\'e approximants constitute a classical method of rational approximation of analytic functions. They are the best local approximations to a power series or, alternatively, they are convergents of a continued fraction (Jacobi or Chebyshev fraction) to this series. A systematic study of such fractions was started in the 18th century by Euler, Lagrange and others, although the ground of the analytic theory was laid in the 19th century in the works of Hermite, Chebyshev and their students and followers, such as Pad\'e, Markov, Stieltjes and others (see e.g.~\cite{Baker96} and a recent review \cite{MR2963451}).

\subsection{The differential equation} \label{sec21}

As before, we particularize our analysis to the case of a function $f\in \LL_\AA$, where $\AA = \{a_1,\dots,a_p\}$, $p\geq 2$, and $a_j$ are pairwise distinct. 
The following theorem belongs basically to Laguerre \cite{MR1503827}, although its derivation, also classical (see Remark~\ref{rem3} below), is different from the original one. We include a more or less detailed proof here mainly with illustrative purposes, having  in mind its extension to HP polynomials in Section~\ref{sec:ODE}:
\begin{theorem}[see  \cite{MR3058747}]\label{t3}               
Let function $f(z) \in \LL_\AA $, $\AA = \{a_1,\dots,a_p\}$, $p\geq 2$, be given by~\eqref{germ},  with polynomials $A$ and $B$  defined by \eqref{defAB}. If 
$P_n$, $ Q_n$ are the associated Pad\'e polynomials of degree $n$, and $R_n$ the remainder~\eqref{Pade}, then there exist polynomials $H_n$ and $C_n$, and a constant $N$ (in general, all depending on $n$) such that   functions $P_n$,   $Q_n f$ and  $R_n$ are solutions of the differential equation with polynomial coefficients
\begin{equation} \label{MainPade}     
AH_nw''+\{(A'-B)H_n-AH_n'\}w' - N C_n w=0,  
\end{equation}
and $\deg (C_n) - \deg (H_n) \leq p -2 $.  

Furthermore, if the index $n$ is normal then 
$$
N=n(n+1), \quad  H=H_n(z)=z^{p-2}+\dotsb \in\P_{p-2}, \quad C=C_n(z)=z^{2p-4}+\dotsb \in \P_{2p-4}.
$$  
\end{theorem}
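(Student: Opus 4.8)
The plan is to follow the classical route of Laguerre, realizing \eqref{MainPade} as the vanishing of a third--order Wronskian determinant and using the Pearson equation $Af'=Bf$ from \eqref{defAB} to clear every occurrence of $f$. The starting observation is that, since $f'/f=B/A$, each derivative of $Q_nf$ has the shape (polynomial)$\cdot f/A^k$: writing $y_1=Q_nf$ and $y_2=P_n$, one finds $y_1'=(U/A)f$ with $U=AQ_n'+BQ_n\in\P_{n+p-1}$, and $y_1''=(V/A^2)f$ with $V=U'A-U(A'-B)$.

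First I would compute the Wronskian $W=y_1y_2'-y_1'y_2$. A direct calculation using $f'=(B/A)f$ gives $W=(f/A)\,H$, where
\[
H := A\,(Q_nP_n'-Q_n'P_n)-B\,Q_nP_n
\]
is a polynomial. To bound its degree I would use the remainder: since $R_n=Q_nf-P_n$, linearity of the Wronskian gives $W(Q_nf,P_n)=-W(Q_nf,R_n)$, and the decay $R_n=\mathcal O(z^{-n-1})$ from \eqref{Pade} forces $W=\mathcal O(z^{-2})$ at infinity; as $f/A\sim z^{-p}$ this yields $\deg H\le p-2$. For a normal index the leading coefficient is nonzero, so $\deg H=p-2$, and $H$ may be normalized to be monic.

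Next I would build the equation from the identity
\[
\det\begin{pmatrix} w & w' & w'' \\ y_1 & y_1' & y_1'' \\ y_2 & y_2' & y_2'' \end{pmatrix}=0,
\qquad\text{equivalently}\qquad
Ww''-W'w'+Sw=0,\quad S:=y_1'y_2''-y_1''y_2',
\]
which is satisfied by each of $w=P_n$, $w=Q_nf$ and $w=R_n$. Multiplying through by $A^2/f$ and invoking the Pearson equation, the coefficient of $w''$ becomes $AH$ and (after differentiating $W=fH/A$) the coefficient of $w'$ becomes exactly $(A'-B)H-AH'$. For the last coefficient, the representations $y_1'=(U/A)f$ and $y_1''=(V/A^2)f$ give $A^2S/f=UAP_n''-VP_n'$, a genuine polynomial; setting it equal to $-NC$ produces \eqref{MainPade}. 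To pin down the remaining data I would, in the normal case, match the leading $z^{n+2p-4}$ terms of the three summands (taking $w=P_n\sim z^n$, with $A$, $H$, $C$ monic), which gives $n(n-1)+2n-N=0$, hence $N=n(n+1)$; and solving \eqref{MainPade} for the zeroth coefficient along the solution $R_n$, $NC=(AHR_n''+[(A'-B)H-AH']R_n')/R_n$, the decay $R_n=\mathcal O(z^{-n-1})$ shows $NC=\mathcal O(z^{2p-4})$, so that $\deg C=2p-4$ and $C$ is monic.

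The clean part is this normal case, where the asymptotics at infinity directly deliver the degrees and the value $N=n(n+1)$. I expect the main obstacle to be the general degree inequality $\deg C-\deg H\le p-2$: when $n$ is not normal, both the order of vanishing of $R_n$ and a possible degree deficiency of $Q_n$ lower the degrees of $H$ and $C$ at once, so one must track these two effects jointly (and verify $R_n\not\equiv0$, so that the divisions above are legitimate) in order to see that their difference stays bounded by $p-2$.
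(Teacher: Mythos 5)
Your proposal is correct and follows essentially the same route as the paper: a vanishing third-order Wronskian identity (your $Ww''-W'w'+Sw=0$ is the paper's first-row expansion with $W_2'=-W_1$), substitution of $R_n$ for $P_n$ via bilinearity to read off the decay of the coefficients at infinity, multiplication by $A^2/f$ through the Pearson relation $Af'=Bf$ to obtain polynomial coefficients with $\Pi_2=AH$ and $\Pi_1=(A'-B)H-AH'$, and matching the coefficient of $z^{n+2p-4}$ along $w=P_n$ to get $N=n(n+1)$ in the normal case. The non-normal inequality $\deg C_n-\deg H_n\le p-2$ that you flag as the remaining obstacle is treated by the paper only in passing as well (Remark~\ref{rem2}, where $R_n\sim z^{-n-1-\ell_n}$ with $0\le\ell_n\le p-2$), and the joint tracking you describe --- the leading datum of $R_n$ fixes $\deg H$ exactly while simultaneously bounding $\deg(NC)$ --- is precisely what closes it.
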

\begin{remark}~\label{rem3}
In  connection with this result see~\cite{Chud,MR748895,MR2964145,MR891770} and 
\cite[\S~3, Theorem~3.1, formula~(3.1)]{MR3058747}). According to J.~Nuttall \cite{MR891770},  says that the method of proof can be traced all the way back to Riemann~\cite{Riemann}.

Since $f(\infty)=1$, substitution of  $\alpha_j$ for $-\alpha_j$ is equivalent to dividing of both sides of~\eqref{Pade} by $f$. It follows that polynomial $Q_{n}$ is a solution of the equation obtained from~\eqref{MainPade} by changing sign of all $\alpha_j$, which basically replaces $B$ by $-B$.
\end{remark}
\begin{remark}~\label{remElectro}
In terminology of \cite{MR2770010}, $P_n$'s are Heine--Stieltjes polynomials.  It is well known (see \cite{MMM,MR2770010}) that \eqref{MainPade} yields an electrostatic model for the zeros of $P_n$'s: they are in equilibrium in the external field created by the masses fixed at the zeros of $A$ and $B$, plus by a number of ``ghost'' or moving charges, corresponding to the zeros of $H_n$.
\end{remark}

\begin{proof}
We fix a neighbourhood $U=\{z:\, |z|>R\}$ of $z=\infty$ where series \eqref{germ} is convergent.  
If  $w(z)=c_1(Q_{n}f)(z) + c_2P_{n}(z)$ is any linear combination of functions $(Q_{n}f)(z)$ and $P_{n}(z)$ then by the general theory, the associated Wronskian vanishes identically,
\begin{equation} 
W[w,P_n, Q_nf](z)  \equiv                                                                                                 
\begin{vmatrix} w&w'&w''\\
P_{n}&P_{n}'&P_{n}''\\
Q_{n}f&(Q_{n}f)'&(Q_{n}f)''
\end{vmatrix}\equiv0,\quad z\in U.
\label{2.1}
\end{equation}
Since $W(z)$ is an analytic function, it must vanish identically in its whole domain of analyticity, $\C\setminus \AA $. Expanding the determinant along the first row yields  the following second order differential equation with respect to $w$, solved in particular by $w = P=P_n$, $w=Qf= Q_nf$ and in consequence, by $w =R= R_n$:
\begin{equation}
W(z) =  W_2(z) w''(z) + W_1(z) w'(z) + W_0(z) w (z) = 0 ,
\label{ode2.2}
\end{equation}
where (we omit the explicit reference to $n$)
\begin{align}
W_2(z) &=  
\begin{vmatrix} 
P&P'\\
Qf&(Qf)'
\end{vmatrix}
=
\begin{vmatrix} 
P&P'\\
R&R'
\end{vmatrix}=
\mathcal O \left(\frac 1{z^2}\right)\quad\text{as}\quad z\to \infty,
\label{2.2}
\\
 -W_1(z) & =  
\begin{vmatrix} 
P&P''\\
Qf&(Qf)''
\end{vmatrix}
=
\begin{vmatrix} 
P&P''\\
R&R''
\end{vmatrix}=
\mathcal O \left(\frac 1{z^3}\right) \quad\text{as}\quad z\to \infty,
\label{2.3}
\\
W_0(z) & =  
\begin{vmatrix} 
P'&P''\\
(Qf)'&(Qf)''
\end{vmatrix}
=
\begin{vmatrix} 
P'&P''\\
R'&R''
\end{vmatrix}=
\mathcal O \left(\frac 1{z^4}\right) \quad\text{as}\quad z\to \infty.
\label{2.4}
\end{align}
The second equality in each  \eqref{2.2}--\eqref{2.4} is obtained by subtracting the first row of the determinant from the second one, and the order of zero for $W_j$ at infinity is found using \eqref{Pade}. 

So far, \eqref{ode2.2}--\eqref{2.4} is just a set of straightforward identities. But now we use the semiclassical character of $f$:  we have $Af'=Bf$ where $A = z^p + \dots,$ $B\in\P_{p-2}$ and further 
$$ 
A^2f''=(B^2+B'A-A'B)f=B_1f,\quad\text{where}\quad B_1\in\P_{2p-3}.
$$

Taking this into account we multiply equation \eqref{2.1} by  $A^2/f$ reducing it to an equation with 
polynomial coefficients
\begin{equation}
\frac{A_p^2(z)}{f(z)}W(z) 
=\Pi_2(z)w''+\Pi_1(z)w'+\Pi_0(z)w\equiv0,
\quad z\in\C\setminus \AA.
\label{2.5}
\end{equation}
Consider for instance the coefficient $\Pi_2$; from \eqref{2.2} we obtain
\begin{equation}
\Pi_2(z) =\frac{A^2}{f} W_2(z) = 
A
\begin{vmatrix} 
P&P'\\
AQ&A(Qf)'/f
\end{vmatrix} (z)= A \begin{vmatrix} 
P&P'\\
AQ&AQ'+BQ
\end{vmatrix} (z)= A(z) H(z),
\label{2.6}
\end{equation}
where $H$ is a polynomial; by the right hand side of \eqref{2.2}, $H\in\P_{p-2}$.  In a similar vein it is established that
$$
\Pi_j = \left(\frac{A^2}{f}\right) W_j \in \P_{2p-4+j}, \quad j=0, 1, 2.
$$ 
Furthermore, an easy consequence of \eqref{2.2} and \eqref{2.3} is that $W'_2 = - W_1$. From here,
\begin{equation}                                                                                                                   
\Pi_1(z)=-\biggl(\frac{f}{A^2}\Pi_2\biggr)'\frac{A^2}{f}=(A'-B)H-AH'.
\label{2.7}
\end{equation}

Finally, for a normal index $n$, \eqref{2.5} has a polynomial solution $P$ of degree exactly $n$. Substituting $P$ into the equation one can calculate the coefficient of the highest power of $z$ (which is $z^{n+2p - 4 }$) and find that the leading coefficient of $\Pi_0$ is $N = - n(n+1)$.

Theorem ~\ref{t3} is proved.
\end{proof}

\begin{remark}~\label{rem2}
Arguments presented in the proof of theorem above show that if we do not assume normality, then   condition \eqref{Pade}  should be replaced by
\begin{equation}
\label{Pademod}
R_n(z)= \frac{M_n}{z^{n+1+\ell_n}} \left(1 + \mathcal O\left(\frac1{z}\right)\right),\quad z\to\infty,
\end{equation}
with $0\leq\ell_n\leq{p-2}$, so that all indices $n\in\N$ are almost perfect, according to the terminology of~\cite{MR748895}. Indeed, if in \eqref{Pademod} we assume  $\ell_n\geq p-1$ then by the arguments above  we would obtain that in \eqref{MainPade}, $\deg{H}<0$, that is $h\equiv0$ and  $\Pi_1=\Pi_0=0$, which is  not possible. 
It follows that for $n' = \mdeg P_n = \mdeg Q_n$ we have $n' \geq  n-\ell_n$ and further $N = n'(n'+1)\geq n(n+1) - 2n\ell_n. $

\end{remark}

Let us finally single out the simplest non-trivial case of Theorem~\ref{t3},  when $f\in \LL_\AA$, with $\AA = \{a_1,a_2, a_3\}$. Then the coefficients for the normal indices in the differential equation~\eqref{MainPade} have the form $H_n(z) = z-z_n$, $C_n(z) = (z-b_n)(z-v_n)$, where   $z_n$, $b_n$, $v_n$ are some parameters, so that   equation~\eqref{MainPade} becomes
\begin{equation}
\label{odeNutall}
A(z)(z-z_n)w''+\bigl((A'-B)(z)(z-z_n)-A(z)\bigr)w'
-n(n+1)(z-b_n)(z-c_n)w=0.
\end{equation}
This was the equation  used by J.~Nuttall~\cite{MR891770} to derive a formula of strong asymptotics for Pad\'e polynomials and their remainder in the case $f\in \mathcal L_{\mathcal A}$, and which we discuss next in the general setting.

\subsection{Asymptotics}\label{s2s1}

The zero asymptotics of numerators and denominators of Pad\'e approximants for functions with singularities constituting a set of zero capacity  was derived in a seminal work of H.~Stahl \cite{MR88d:30004a,Stahl:86}, see the details below. However, the proof of Stahl's theorem is based on rather sophisticated potential theoretical arguments, and has no simplification even for the semiclassical functions from $\mathcal L$, for which we have the bonus of the Laguerre differential equation  \eqref{MainPade}. 
It is tempting to use \eqref{MainPade} to study the asymptotic behavior of the Pad\'e polynomials, finding a simpler proof of Stahl's result, at least 
for the class $\mathcal L$. 

The first attempt in this direction for non-classical situation, although not successful, is due to Laguerre himself \cite{Laguerre}. 
A hundred years later J.~Nuttall~\cite{MR891770} repeated the attempt, this time successfully, even before the publication of Stahl's results. Since the Laguerre differential equation is of order 2, it is natural to use the Liouville--Green (Liouville--Steklov or WKB) method, which actually yields even the strong asymptotics of the polynomials (at least, away from the support of the limiting zero counting measure). However, in the case of functions $f\in \mathcal L$ with $p>2$, equation  \eqref{MainPade} has non-trivial accessory parameters $H_n$ and $C_n$ whose asymptotics is  a priori not clear. Their behavior was conjectured by Nuttall first, and then rigorously proved for $p=3$ 
in the pioneering work~\cite{MR891770}, where the crucial step relied on the Stahl's theorem just appeared in \cite{Stahl:86}! Nuttall's method was extended to the case of an arbitrary number of branch points in \cite{MR2964145}. Again, asymptotics of the free parameters along full sequence $n$ is  obtained using Stahl's theorem.

In the approach above the goal was the strong asymptotics, and the Olver's lemma \cite{Olver74b} played the key role. Now, interested in the weak asymptotics, we proceed slightly differently: we use the Ricatti substitution in \eqref{MainPade} and take limits along converging subsequences, resulting in an algebraic, instead of a differential, equation.

Namely, 
substitution 
\begin{equation} \label{WKB}
w_n = \exp\left( n \int^z v_n(t)dt  \right) \quad \text {or, equivalently, }\quad v_n = \frac 1n w'_n/w_n
\end{equation}
in equation \eqref{MainPade} and division by $AH_n$ reduces it to the Ricatti equation
\begin{equation}\label{Ric}
- \frac 1n v'_n = v_n^2 + s_n v_n + r_n,
\end{equation}
where 
\begin{equation} \label{Ric-1}
s_n =  \frac 1n \left(\frac {A'}{A} - \frac {B}{A} - \frac {H_n'}{H_n}\right) v_n 
\qquad \text {and }\qquad 
r_n = -\frac {N}{n^2}\frac {C_n}{AH_n}.
\end{equation}
Equation \eqref{Ric} has, in particular, solutions $v_n = P'_n/(nP_n)$, $Q'_n/(nQ_n) + f'/(nf)$ and $R'_n/nR_n$.

Due to the weak compactness of the zero-counting measures involved in the coefficients of the equation \eqref{Ric}, we have that
$$
\lim_n \frac{v_n(z)}{n}= \lim_n s_n(z) =0 \quad m_2 \text{-a.e.}
$$
on compact subsets of $\C$. Furthermore, $N/n^2 \to 1$ as $n\to \infty$. Consequently, \begin{lemma}\label{L2} 
We have 
$$
\lim_n \left( v^2_n - \frac{C_n}{AH_n}\right) = 0 \quad m_2 \text{-a.e.} $$
on compact subsets of $\C$.
\end{lemma}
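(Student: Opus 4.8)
The plan is to read the quantity $v_n^2-C_n/(AH_n)$ off the Riccati equation \eqref{Ric} algebraically and then to show that the resulting error terms vanish, using only the facts already recorded before the statement ($v_n/n\to0$ and $s_n\to0$ $m_2$-a.e., and $N/n^2\to1$). Solving \eqref{Ric}--\eqref{Ric-1} for the coefficient $C_n/(AH_n)$ gives
\[
\frac{C_n}{AH_n}=\frac{n^2}{N}\Bigl(v_n^2+s_nv_n+\tfrac1n v_n'\Bigr),
\]
so that
\[
v_n^2-\frac{C_n}{AH_n}=\Bigl(1-\tfrac{n^2}{N}\Bigr)v_n^2-\tfrac{n^2}{N}\,s_nv_n-\tfrac{n^2}{N}\cdot\tfrac1n v_n'.
\]
It therefore suffices to prove that each of the three terms on the right tends to $0$, $m_2$-a.e.\ on compact subsets, along a subsequence for which the zero counting measure of the polynomial factor of $w_n$ converges weak-$*$ to some $\mu$. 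By \eqref{Cauchy1}, for $w_n=P_n$ one has $v_n=C^{\mu_n}$; for $w_n=Q_nf$ and $w_n=R_n$ the function $v_n$ differs from a Cauchy transform by the semiclassical term $\tfrac1n f'/f=\tfrac1n B/A$, which together with its derivative tends to $0$ locally uniformly off $\AA$ and is thus harmless, so I would treat $v_n=C^{\mu_n}$ throughout.

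First I would dispose of the two easy terms. Along the chosen subsequence $v_n\to C^\mu$ $m_2$-a.e., so $v_n$ and $v_n^2$ are bounded $m_2$-a.e.; since $1-n^2/N\to0$, the term $(1-n^2/N)v_n^2\to0$ $m_2$-a.e. Using in addition the already established $s_n\to0$ $m_2$-a.e.\ together with $n^2/N\to1$, the term $(n^2/N)s_nv_n$ is a null sequence times a bounded one, and hence also tends to $0$ $m_2$-a.e.

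The hard part will be the third term, namely the claim $\tfrac1n v_n'\to0$. Here $\tfrac1n v_n'(z)=-\tfrac1n\int d\mu_n(t)/(z-t)^2$, and the obstacle is that the kernel $|z-t|^{-2}$ is \emph{not} locally integrable in the plane, so there is no useful $L^1_{loc}$ bound and $v_n'$ itself need not be small. My remedy is to pass to a fractional exponent and exploit the extra factor $1/n$. Fixing $p\in(1/2,1)$ and a compact $K$, and writing $z_1,\dots,z_m$ (with $m\leq n$) for the finite zeros of the polynomial factor of $w_n$, subadditivity of $t\mapsto t^p$ yields $\bigl|\tfrac1n v_n'\bigr|^p\leq n^{-2p}\sum_j|z-z_j|^{-2p}$, whence
\[
\int_K\Bigl|\tfrac1n v_n'\Bigr|^p\,dm_2\leq \frac{1}{n^{2p}}\sum_{j=1}^m\int_K\frac{dm_2(z)}{|z-z_j|^{2p}}\leq C_{K,p}\,n^{1-2p},
\]
where $C_{K,p}=\sup_{w\in\C}\int_K|z-w|^{-2p}\,dm_2(z)<\infty$ precisely because $2p<2$, and the bound is uniform in the location of the zeros. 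Since $1-2p<0$, the right-hand side tends to $0$, so $\tfrac1n v_n'\to0$ in $L^p_{loc}(m_2)$ and therefore $m_2$-a.e.\ along a further subsequence. Combining the three estimates gives $v_n^2-C_n/(AH_n)\to0$ $m_2$-a.e., which is the assertion of the lemma; the same computation applies verbatim to the $Q_nf$ and $R_n$ solutions once the $\tfrac1n B/A$ correction is discarded.
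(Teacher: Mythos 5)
Your argument is correct and follows the same skeleton as the paper's: read $v_n^2-C_n/(AH_n)$ off the Riccati equation \eqref{Ric} and show the error terms vanish. The difference is in the level of detail: the paper's proof is a two-line assertion (weak compactness gives $\lim_n v_n/n=\lim_n s_n=0$ $m_2$-a.e., and $N/n^2\to1$), silently subsuming the one genuinely delicate point, which you correctly isolate — that $\tfrac1n v_n'\to0$ cannot be obtained from an $L^1_{\rm loc}$ bound because $|z-t|^{-2}$ is not locally integrable. Your remedy, the estimate $\int_K\bigl|\tfrac1n v_n'\bigr|^p\,dm_2\leq C_{K,p}\,n^{1-2p}$ for $p\in(1/2,1)$ via subadditivity of $t\mapsto t^p$ and the uniform-in-$w$ bound on $\int_K|z-w|^{-2p}\,dm_2$, is exactly the standard way to make this step rigorous, and your rearrangement (solving for $C_n/(AH_n)$ rather than for $-\tfrac1n v_n'$) has the added benefit of requiring only a.e.\ boundedness of $v_n^2$ instead of control of $C_n/(AH_n)$, whose possible degeneration to $\infty$ the paper must otherwise handle through the spherical normalization it mentions right after the lemma. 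Two caveats, neither fatal. First, your $L^p_{\rm loc}$ bound gives convergence in $m_2$-measure, hence a.e.\ convergence only along a further subsequence; this matches the paper's own loose usage (cf.\ the remark after \eqref{Cauchy1}, where convergence ``in the plane Lebesgue measure'' means convergence in measure), but you should state it that way rather than claim a.e.\ convergence of the full sequence from the $L^p$ estimate alone. Second, your side remark that for $w_n=R_n$ the function $v_n$ ``differs from a Cauchy transform by $\tfrac1n f'/f$'' is literally true only for $w_n=Q_nf$; since $R_n=Q_nf-P_n$, the logarithmic derivative $R_n'/(nR_n)$ is not of that form, and its zeros are not a priori controlled. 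For the zero-distribution application only the solutions built from the polynomials matter, so this does not damage the proof, but the parenthetical claim as written is inaccurate for $R_n$.
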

In particular, this procedure will yield an algebraic (vs.~a differential), and more precisely, a quadratic asymptotic expression.

The sequence of rational functions ${C_n}/{AH_n}$ is compact in the $m_2$ convergence, with possible limits including identically $\infty$ and $0$; this  happens if some zeros of $C_n$ or $H_n$ (or both) go to infinity. To handle such an event one has to use the spherical normalization of polynomials,  and  normalize the term $ v^2_n$ accordingly. 

Let us select convergent subsequences 
$$C_n \to C \qquad \text {and }\qquad  H_n \to H \qquad \text {as }\quad n\in \Lambda\subset \N. $$ 
In the generic case we have $\mdeg H = p-2,\, \mdeg C = 2p-4$; otherwise, the degrees of $C$ and $H$ may be reduced. Eventually, this is not important since we have cancellation of possible large zeros of $C_n$ and $H_n$. More exactly, 
\begin{lemma}\label{L3} We have $C = VH$ where  $V(z) = z^{p-2} +\dots$.\end{lemma}
This lemma is eventually a  consequence of the fact that the only possible singularities of any solution of \eqref{MainPade} are the zeros of $A$.

In \cite{MR3058747} one can find the proof of a stronger assertion: for any large enough $n$ we have representation $C_n = \tilde H_n V_n $ where $|\tilde H_n - H_n| = \mathcal O(1/n)$ on compacts in $\C$ provided that polynomials are spherically normalized. 

However, as it was mentioned before, all these arguments work along subsequences of $\N$, and the uniqueness of the limiting polynomial $V$ (and thus, of the asymptotic zero distribution) does not follow from this analysis. For $p>2$ (see \cite{MR2964145,MR891770}), uniqueness has been established so far recurring to the Stahl's theorem on the weak asymptotics, that we briefly outline next\footnote{A reader interested strictly in the proof of the main results (Theorems~\ref{main}--\ref{t4}) may skip the rest of this section and move to Section \ref{sec:ODE}.}. It is worth mentioning however a new approach, discussed in~\cite{RakhPreprint}, which uses fixed points arguments to prove uniqueness of $V$. 

Let us recall that for a function from the class $\LL$ Stahl's theorem asserts that there exist a unique (up to subsets of capacity zero) compact  set $F = F_f \subset \C$, which is a union of analytic arcs, with the following properties: the complement to $F$ is connected, $f$ is single-valed in $\overline\C\setminus F$, the jump of $f$  across any arc in $F$ is not identically zero and, finally, the \emph{$S$-property} holds:
$$
 \frac{\partial g}{\partial n_+}(z) =  \frac{\partial g}{\partial n_-}(z) ,  \quad z\in F^0,
$$
where   $g$ denotes the Green function  of $\C\setminus F$ with pole at infinity,  and $n_\pm$ are two oppositely directed normals to $F^0$, where $F^0$ is the union of open parts of arcs constituting $F$. Furthermore, the Robin (equilibrium) measure of such a compact  $F$ is precisely the weak-* limit of the zero counting measures $\mu(Q_n)$ for the  Pad\'e deminators $Q_n$. It was proved also that  the sequence of Pad\'e approximants $\pi_n = P_n/Q_n$ associated with $f$ converges to the function $f$ in capacity in the complement to $F$; the exact rate of convergence in capacity was also determined.

The proof of the existence of such a set $F_f$ relied on its characterization as the set of minimal capacity:
\begin{equation*}
\text{cap}(F_f) =\min \left\{ \text{cap} (F):\,   f \text{ is holomorphic and single--valued in } \C\setminus F \right\}.
\end{equation*}

This extremal problem is close to the classical Chebotarev's problem of minimal capacity in the class  of all continua $F$ on plane containing $\AA$. For finite sets $\AA$ it was solved  by Grotsch's in 1930 \cite{Groetsch1930}. Stahl's result is more general even for finite sets $\AA$; it is actually a theorem from the geometric function theory related to a version of a general moduli problem; see \cite{Strebel:84}.  For finite sets $\AA$ a simple solution of the existence problem based on max-min energy problem was given in \cite{Rakhmanov94}; see also \cite{MR2770010} for a study of so called {\it critical measures} in plane which present  another generalized version of moduli problem. Stahl's theorem was extended later to the case of the existence of an external field (or to a varying orthogonality) by Gonchar and Rakhmanov in \cite{Gonchar:87}, and to more complex equilibria in \cite{Buslaev2012,Sproperty2011}. It is worth mentioning also \cite{Bus13} where an analogue of Stahl's
theorem for the case of $m$-point Pad\'e approximants was proved. In this situation the
external field is given by unit negative charges supported at the $m$
interpolation points, and as a consequence in a ``generic case'' the corresponding
 $S$-curve makes an optimal partition of the Riemann sphere into $m$ domains
centered at these $m$ interpolation points.

The proof of the convergence assertions of Stahl's theorem   is based on the complex (non-hermittian) orthogonality conditions for Pad\'e denominators $Q_n$, of the form
\begin{equation*} 
\oint_F Q_n(z) z^k f(z) dz = 0, \quad k =0,1, \dots, n-1,
\end{equation*}    
where integration is taken over any system of contours separating $F$ from infinity.

We can describe the extremal set $F_f$ in terms of trajectories of a quadratic differential:  there  exists a polynomial $V(z)=V_f(z) = \prod_{i=1}^{p-2} (z-v_i)$ such that the quadratic differential  $-(V/A)\,(dz)^2$ on the extended plane $\overline \C$ is closed: all its trajectories, given by
$$ 
\frac { V(z)}{A(z)} (dz)^2 < 0  ,
$$
are either closed contours or critical arcs, joining poles or zeros of $AV$.

Function $\sqrt{ {V(t)}/{A(t)}}$ has a holomorphic branch in $\Omega = \overline\C\setminus F$   and the  Green function for $\Omega $ with pole at infinity can be written as
\begin{equation} \label{Gre}  
g(z) = \Re G(z),\qquad  G(z) = \int_a^z \, \sqrt{ {V(t)}/{A(t)}}\, dt, \quad (a\in \mathcal A),
\end{equation}
where branch of the root is selected by the condition 
$$
\lim_{z\to \infty} z \sqrt{ {V(t)}/{A(t)}} = 1.
$$

Formula \eqref{Gre} shows that function $g$  has a harmonic continuation to   the hyperelliptic Riemann surface $\RS$  of the function $\sqrt{ VA}$,   which is convenient to  interpret as a two sheeted branched covering over $\overline\C$. In fact, it is the real $g$-function of $\RS$, while $G$ is the corresponding complex $g$ - function, in the sense of Definition~\ref{def:gfunction}.

Now, the problem is to prove that the Riemann surface $\RS$ is uniquely determined by the function $f.$ We suggest a procedure consisting of two steps. The first step is the determination of the family of all Riemann
surfaces associated with all functions $f \in \mathcal L_{\mathcal A}$ with a fixed set $\mathcal A$ of branch points. This determination is made by using special properties of the $g$-functions associated with Riemann surfaces $\RS$ originated by compacta $F_f$. 

It follows from Stahl's theorem that the family of possible Riemann surfaces $\RS$ is finite. Independently of Stahl's theorem we can assert this fact from the following perspective (in the situation of common position). 

Consider all hyperelliptic Riemann surfaces with $2p-2$ quadratic branch points out of which $p$ are fixed at zeros of $\AA$ and the remaining $p-2$, not necessarily distinct and that we denote by $v_1, \dots, v_{p-2}$, are free; let's
$$
V(z)=\prod_{i=1}^{p-2} (z-v_i).
$$  
It turns out that the requirement that the derivative $G'$ of the complex $g$-function can have poles only at $\AA$ singles out only   a finite number of such polynomials $V$.   Note that the formula $G' = \sqrt{V/A}$ above tells us that this must hold indeed for $g$-functions originated by Stahl's compacta $F_f$. We assert that this property is characteristic for the $S$-compacta for functions with branch points at the roots of $A$.
   
Next, it is clear that the extremal compact $F_f$ is the projection of the zero level of $g_{\RS}$, or, better to say, the zero level of $g_{\RS}$ is the lifting of $F_f$ onto $\RS$, that is, 
$$ F^*  = \left\{\zeta \in \RS:\quad g(\zeta) = 0 \right\} = \pi^{-1}(F_f). $$
The selection of the unique Riemann surface associated with $f\in \mathcal L_\AA$ is made using properties of 
the projection of the zero level of $g_{\RS}$, which is $F_f$:  this set of analytic arcs on the plane has to make $f$ single valued in the complementary domain and also the jump of $f$ across any those arcs must be not identically zero.

The uniqueness of such a Riemann surface implies convergence of the sequence $R'_n/(nR_n)$ (and in consequence, of the sequence of counting measures for the Pad\'e denominators) along the whole $\N$.

\section{The differential equation for Hermite--Pad\'e polynomials} \label{sec:ODE}

The considerations of Section~\ref{s2}  will be extended here to the case of the Hermite--Pad\'e approximants.

\subsection{Proof of Theorem~\ref{main}}\label{proofmain}

From the definition~\eqref{1.1a} and the properties of $\LL$ we know that the remainder $w(z):=R_n(z)$ is a multivalued analytic function in $\C$, with a finite number of branch points. Any branch of this function in the neighborhood of infinity $U=\{z\in \C:\, |z|>R\}$ is a linear combination with constant coefficients of the  functions $Q_{n,k} f_k$, $k=0, 1, \dots, s$. We  will drop the index $n$ in the notations, and write them $Q_k f_k$. It follows that any branch of $w(z):=R(z)$ in $U$ is a solution of the $s$-th order differential equation 
\begin{equation*}
W(z) = \begin{vmatrix} w&w'&w''& \dots & w^{(s)} \\
Q_{0}&Q_{0}'&Q_{0}''& \dots & Q_0^{(s)} \\
Q_{1}f_1&(Q_{1}f_1)'&(Q_{1}f_1)''& \dots & (Q_{1}f_1)^{(s)} \\
\vdots & \vdots & \vdots & & \vdots \\
Q_{s}f_s&(Q_{s}f_s)'&(Q_{s}f_s)''& \dots & (Q_{s}f_s)^{(s)}
\end{vmatrix} \equiv0,\quad z\in{U},
\end{equation*}
where $f^{(k)}$ denotes the $k$-th derivative of $f$.
Observe that this is an $s$-th  order differential equation with respect to $w$, solved in particular by $w =  Q_j f_j$ and in consequence, by $w =R= R_n$, so that
\begin{equation*}
W(z) = \begin{vmatrix} w&w'&w''& \dots & w^{(s)} \\
Q_{0}&Q_{0}'&Q_{0}''& \dots & Q_0^{(s)} \\
Q_{1}f_1&(Q_{1}f_1)'&(Q_{1}f)''& \dots & (Q_{1}f_1)^{(s)} \\
\vdots & \vdots & \vdots & & \vdots \\
R &R'&R''& \dots & R^{(s)}
\end{vmatrix} \equiv0,\quad z\in{U}.
\end{equation*}
Expanding these determinant along the first row yields
\begin{equation}
W(z) =  \sum_{j=0}^s W_j(z) w^{(j)}(z) = 0 ,
\label{odeGeneral1}
\end{equation}
where the coefficient  $W_j$, $j=0,1, \dots, s$, is the minor obtained by deleting the first row and the $(j+1)$-th column in the determinantal expression for $W$. 
For instance,
\begin{align*}
W_0(z)& = \begin{vmatrix} 
Q_{0}'&Q_{0}''& \dots & Q_0^{(s)} \\
(Q_{1}f_1)'&(Q_{1}f_1)''& \dots & (Q_{1}f_1)^{(s)} \\
 \vdots & \vdots & & \vdots \\
(Q_{s}f_s)'&(Q_{s}f_s)''& \dots & (Q_{s}f_s)^{(s)}
\end{vmatrix}(z) \\
&=  \begin{vmatrix} 
 Q_{0}'&Q_{0}''& \dots & Q_0^{(s)} \\
 (Q_{1}f_1)'&(Q_{1}f_1)''& \dots & (Q_{1}f_1)^{(s)} \\
  \vdots & \vdots & & \vdots \\
 R'&R''& \dots & R^{(s)}
\end{vmatrix}(z)=\mathcal O\left( z^{-\frac{s(s+3)}{2}}\right), \quad z\to \infty,
\end{align*}
where we have used that $Q_j\in \P_n$ and the asymptotics of $f_k$ and $R$ at infinity. Operating in the same fashion, we conclude that
\begin{equation}
\label{asymptW}
W_j(z)=\mathcal O\left( z^{-\frac{s(s+3)}{2}+j}\right), \quad z\to \infty, \quad j=0, 1, \dots, s.
\end{equation}

The assumption that $f_k\in \mathcal L$ implies that there exist a pair of polynomials, $A_j$ and $B_j$, $A_j$ monic, and $\deg (A_k) - \deg (B_k)\geq 2$, such that
$$
\frac{f_k'}{f_k}(z)=\frac{B_k}{A_k}, 
$$
and thus,
\begin{equation*}
A_k^j f_k^{(j)}= B_{k,j}f_k, \quad j=0, 1, 2, \dots,
\end{equation*}
where $B_{k,j}$ are algebraic polynomials. A consequence is that
$$
\frac{A_k^j}{f_k} \left( Q_k f_k \right)^{(j)}\in \P, \quad j=0, 1, \dots, \quad k=0, 1, \dots, s.
$$

Hence, multiplying \eqref{odeGeneral1} by 
$$
\prod_{k=0}^s\frac{A_k^s}{f_k^s}(z),
$$
we obtain the equivalent ODE of the form
\begin{equation*}
\prod_{k=0}^s\frac{A_k^s}{f_k^s}(z)W(z) =  \sum_{k=0}^s \Pi_k(z) w^{(k)}(z) = 0 ,
\end{equation*}
where all coefficients $\Pi_k$ are polynomials.  By \eqref{asymptW}, their degrees are jointly uniformly bounded.

This proves the theorem.

\subsection{Proof of Theorem ~\ref{t1}}\label{s3s1}

Using the arguments of the proof of Theorem~\ref{main} we conclude that
\begin{equation}
W(z)=\begin{vmatrix} w&w'&w''&w'''\\
Q_{0}&Q_{0}'&Q_{0}''&Q_{0}'''\\
Q_{1}f&(Q_{1}f)'&(Q_{1}f)''&(Q_{1}f)'''\\
Q_{2}f^2&(Q_{2}f^2)'&(Q_{2}f^2)''&(Q_{2}f^2)'''
\end{vmatrix} \equiv0,\quad z\in U=\{z\in \C:\, |z|>R\} ,
\label{3.1}
\end{equation}
where we again omit the subindex $n$. From the assumption that $f\in\LL_\AA$, $\AA = \{a_1,\dots,a_p\}$, $p\geq 2$, it follows that this identity can be extended to $\C\setminus \mathcal A$. Furthermore, there exist algebraic polynomials $B$, $C$ and $D$ with complex coefficients, such that
\begin{equation}
Af'=Bf,\quad A^2f''=Cf,\quad A^3f'''=Df,  \quad A(z)=\prod\limits_{j=1}^p(z-a_j).
\label{3.2}
\end{equation}
As in the proof of Theorem~\ref{main}, after multiplying $W$ by $A^6/f^3$ we get a differential equation with polynomial coefficients. Actually, it is sufficient to multiply $W$ by  $A^5/f^3$.  Indeed, derivatives of order three are all located in the last  column of the determinant in \eqref{3.1}, and therefore, each coefficient in first  row of decomposition contains only one such term. 

Thus, we have proved that 
\begin{equation}                                                                                                         
\frac{A^5}{f^3}W(z)=
\Pi_3(z)w'''+\Pi_2(z)w''+\Pi_1(z)w'+\Pi_0(z)w \equiv 0,\quad z\in\C\setminus \AA,
\label{3.3}
\end{equation}
where $\Pi_j\in\P$. 
The generic case conditions imply that $\mdeg{Q_{j}}=n$, $j=0,1$, and $R(z)=M_n/z^{2n+2}+\dots$, where $M_n\neq0$. It implies that for polynomials  $\Pi_j$ we have $\mdeg{\Pi_j}=5p-9+j$, $j=0,1,2,3$. Thus, those polynomials depend on  $n$  but their degrees are uniformly bounded. 

It is easy to see that for  $\Pi_3$ we have representation
\begin{equation*}
\Pi_3(z)=A^2_p(z)H(z),
\end{equation*}
where  $H(z)=H_n(z)=z^{3p-6}+\dots\in\P_{3p-6}$. Next, using the fact coefficients of equation \eqref{3.3} were by first row decomposition of Wronskian we obtain (with arguments similar to what was done in Section~\ref{sec21}) a representation for $\Pi_2$:
\begin{equation*}
\Pi_2
=-\biggl(\frac{f^3}{A^5}\Pi_3\biggr)'\frac{A^5}{f^3}
=-\biggl(\frac{f^3}{A^3}H\biggr)'\frac{A^5}{f^3}
=A\{3(A'-B)H-AH'\}.
\end{equation*}
Finally, taking into  account the fact that equation~\eqref{3.3} has  a polynomial $Q_0$ of degree  $\mdeg{Q_0}=n$, as well as another solution with the leading term at infinity of the form  $R^{*}_n(z)=1/z^{2n+2}+\dots$,  we arrive at two equations for the leading coefficients of the polynomials $\Pi_1\in\P_{5p-8}$ and $\Pi_0\in\P_{5p-9}$:
\begin{equation*}
\Pi_1(z)=-3(n-1)(n+2)\Pi_1^{*}(z),\qquad
\Pi_0(z)=2(n-1)n(n+1)\Pi_0^{*}(z).
\end{equation*}
Theorem ~\ref{t1} is proved.

\subsection{Proof of Theorem~\ref{coro:casep2}}\label{s3s2}

We turn to considering the case $p=2$, $a_1=-1$, $a_2=1$ and
$f(z)=f(z;\alpha)=\bigl((z-1)/(z+1)\bigr)^\alpha$, $2\alpha\in\C\setminus\ZZ$.

It is directly verified that in this particular case we have $H(z)\equiv1$,
$\Pi_2(z)=6A_2(z)(z-\alpha)=6(z^2-1)(z-\alpha)$. 

Next, we prove that for polynomials.
$\Pi_1=\Pi_1(z;\alpha)$
and $\Pi_0=\Pi_0(z;\alpha)$ the following equalities are valid:
\begin{equation}
\begin{gathered}
\Pi_1(-z;-\alpha)=\Pi_1(z;\alpha),\qquad
-\Pi_0(-z;-\alpha)=\Pi_0(z;\alpha),\\
\Pi_1(z;-\alpha)=\Pi_1(z;\alpha)-3b_1(z)+6(z-\alpha)b_0,\\
A_2(z)\Pi_0(z;-\alpha)=A_2(z)\Pi_0(z;\alpha)+b_2(z)+6(z-\alpha)b_1(z)
+\Pi_1(z;\alpha)b_0,
\end{gathered}
\label{3.8}
\end{equation}
where polynomials $b_0=\const,b_1(z),b_2(z)$ are defined from the following equations
(see ~\eqref{3.2}):
\begin{equation}
\begin{gathered}
A_2(f^2)'=4\alpha f^2=b_0f^2,\qquad
A^2_2(f^2)''=8\alpha(2\alpha-z)f^2=b_1(z)f^2,\\
A_2^3(f^2)'''=8\alpha\{(2\alpha-z)4\alpha-4A_2'(2\alpha-z)-A_2\}f^2=
b_2(z)f^2.
\end{gathered}
\label{3.9}
\end{equation}

We write equation ~\eqref{1.9} in the form
\begin{equation}
A_2^2w'''+\Pi_2w''+\Pi_1w'+\Pi_0w=0,
\label{A.1}
\end{equation}
where $\Pi_2(z)=6(z-\alpha)A_2(z)$, $A_2(z)=(z^2-1)$ and $\Pi_1\in\P_{5p-8}=\P_2,
\Pi_2\in\P_{5p-9}=\P_1$. In particular, polynomial $\Pi_2(z)$ is known. Let
$\Pi_1(z)=k_2z^2+d_2z+c_2$, $\Pi_0(z)=k_1z+c_1$; in the generic case we have $k_j\neq0$, $d_2,c_1,c_2$, functions $Q_{n,0},Q_{n,1}f$, $Q_{n,2}f^2$ and $R_n$ are solutions of this equation, and their monic versions satisfy  
$Q_{n,0}^*(z)=z^n+\dots$, $Q_{n,2}^*(z)=z^n+\dots$, $R_n^*(z)=1/z^{2n+2}+\dots$. Substitute $w=Q_0$  in~\eqref{A.1}; then the left hand side of the equation is a polynomial. Equating its coefficient  at $z^{n+1}$ (the leading coefficient) to zero we obtain 
\begin{equation}
n(n-1)(n+4)+nk_2+k_1=0.
\label{A.2}
\end{equation}
Similarly, substituting $w=R_n^*$ and equating to zero the coefficient at $1/z^{2n+1}$ we get another equation for
$k_1,k_2$:
\begin{equation}
-4(n+1)(2n+3)(n-1)-2(n+1)k_2+k_1=0.
\label{A.3}
\end{equation}
Solving system ~\eqref{A.2} and~\eqref{A.3} for $k_1$ and $k_2$ we obtain
\begin{equation*}
k_1=2(n-1)n(n+1)=2(n^2-1)n,\qquad k_2=-3(n-1)(n+2).
\end{equation*}

From normalization  $f(\infty;\alpha)=1$ and identity $f(z;-\alpha)=1/f(z;\alpha)$, by 
dividing both sides of the definition~\eqref{1.1N} by $f^2$ we come to
$Q^{(\alpha,-\alpha)}_{n,2}=Q^{(-\alpha,\alpha)}_{n,0}$.
Therefore,
the polynomial $Q_{n,2}$ satisfies a differential equation which is obtained from \eqref{A.1},
by replacing $\alpha$ by $-\alpha$. The modified equation is
\begin{equation}
A_2^2\myt w'''+\myt\Pi_2\myt w''+\myt\Pi_1\myt w'+\myt\Pi_0\myt w=0,
\label{A.5}
\end{equation}
where $\myt\Pi_2(z)=6(z+\alpha)A_2(z)$, $\myt\Pi_j(z)=\Pi_j(z;-\alpha)$, $j=1,2$.

Next, substitute the solution $w=Q_{n,2}f_2$ where $f_2=f^2\in\LL_{\{-1,1\}}$ in~\eqref{A.1} and regroup terms so that equation takes form~\eqref{A.5} with solution $\myt w=Q_{n,2}$. This yields the following relation for coefficients $\Pi_j$ and $\myt\Pi_j$ of equations~\eqref{A.1} and~\eqref{A.5}:
\begin{gather}
3A_2B_0+\Pi_2=\myt\Pi_2,\notag\\
3B_1+2\Pi_2\frac{B_0}{A_2}+\Pi_1=\myt\Pi_1,
\label{A.6}\\
\frac{B_2}{A_2}+\Pi_2\frac{B_1}{A_2^2}+\Pi_1\frac{B_0}{A_2}+\Pi_0=\myt\Pi_0,
\label{A.7}
\end{gather}
where $\Pi_2(z)=6(z-\alpha)A_2(z)$,
\begin{equation}
\gathered
A_2f_2'=4\alpha f_2=B_0f_2,\qquad
A_2^2f_2''=8\alpha(2\alpha-z)f_2=B_1f_2,\\
A_2^3f_2'''=f_28\alpha\{(2\alpha-z)4\alpha-4B_2A_2'(2\alpha-z)-A_2\}=
B_2f_2.
\endgathered
\label{A.8}
\end{equation}
It follows from~\eqref{A.6} and~\eqref{A.8} 
\begin{equation}
24\alpha z+\Pi_1(z)=\myt\Pi_1(z)\equiv\Pi_1(z;-\alpha).
\label{A.9}
\end{equation}
Using the symmetry $f(-z;-\alpha)=f(z;\alpha)$ and the differential equations~\eqref{A.1} and~\eqref{A.5}, we get
\begin{equation}
\Pi_1(-z;-\alpha)=\Pi_1(z;\alpha),\qquad
\Pi_0(-z;-\alpha)=-\Pi_0(z;\alpha).
\label{A.10}
\end{equation}
Therefore, from ~\eqref{A.9} and~\eqref{A.10}  we obtain
\begin{equation*}
\begin{aligned}
\Pi_1(z;\alpha)&=k_2z^2-12\alpha z+c_2,\quad\text{where}\quad k_2=-3(n-1)(n+2),\\
\Pi_0(z;\alpha)&=k_1z+c_1,\quad\text{where}\quad  k_1=2(n-1)n(n+1).
\end{aligned}
\end{equation*}
Taking the limit as $z\to\infty$ in~\eqref{A.7} and with the equation $c_1(-\alpha)=-c_1(\alpha)$ (following from~\eqref{A.10}) we find that
\begin{equation}
c_1(\alpha)=2\alpha(3n(n+1)-8).
\label{A.12}
\end{equation}
Similarly, evaluating~\eqref{A.7} at $z=0$ and combining it with~\eqref{A.12} it follows that
\begin{equation*}
c_2(\alpha)=3n(n+1)+8\alpha^2-10.
\end{equation*}

Finally, from~\eqref{3.8}--\eqref{3.9} one can find polynomials $\Pi_1$ and $\Pi_0$ explicitly:
\begin{align}
\Pi_1(z;\alpha)&=-3(n-1)(n+2)z^2-12\alpha z+\bigl[3n(n+1)+8\alpha^2-10\bigr],
\notag\\
\Pi_0(z;\alpha)&=2n(n^2-1)z+2\alpha\bigl[3n(n+1)-8\bigr].
\notag
\end{align}
This concludes the proof.

\section{Asymptotics of Hermite--Pad\'e polynomials}\label{s4}

\subsection{Proof of Theorem~\ref{t2}} \label{subsecThmt2}

Denote $E=[-1,1]$. 
Definition~\eqref{1.1N} (with $s=2$) yields immediately the following orthogonality relation
\begin{equation}
\label{3}
\oint_E(Q_{n,1}f+Q_{n,2}f^2)(\zeta)q(\zeta)\,d\zeta=0,
\end{equation}
where $q\in\P_{2n}$ is arbitrary, and we integrate along an arbitrary closed contour encircling and sufficiently close  to $E$.
Using the construction of Pad\'e polynomials (see e.g.~\eqref{Pade}) we find
$P_{n,0}, P_{n,1}\in\P_n$, $P_{n,1}\not\equiv0$, such that
\begin{equation}
\label{4}
(P_{n,0}+P_{n,1}f)(z)=\mathcal O\left(\frac1{z^{n+1}}\right),\quad z\to\infty.
\end{equation}
From~\eqref{4} it follows immediately that
\begin{equation}
\label{5}
\oint_E P_{n,1}(\zeta)f(\zeta)p(\zeta)\,d\zeta=0,\quad\forall p\in\P_{n-1}.
\end{equation}
Since $f(x)>0$ when $x>1$, we have
\begin{equation}
\label{6}
f^{\pm}(x)=e^{\pm i\pi\alpha} f_0(x),\quad
x\in(-1,1),
\end{equation}
where $f^+$ (resp., $f^-$) are the boundary value of $f$ on $(-1,1)$
from the upper (resp., lower) halfplane, and $ f_0$ was defined in \eqref{falphNalt}. 
In consequence,
\begin{equation}
\label{defDeltaF}
\Delta f(x):=(f^{+}-f^{-})(x)=2i\sin(\alpha)  f_0(x)=2i\sin(\alpha)\left(\dfrac{1-x}{1+x}\right)^\alpha, \quad x\in (-1,1).
\end{equation}
Taking into account that $|\alpha|\in (0, 1/2)$, \eqref{6} can be rewritten as
$$
\int_{-1}^1 P_{n,1}(x) p(x) \Delta f(x) \,dx=0,\quad\forall p\in\P_{n-1},
$$
and we conclude that polynomials $P_{n,1}=P_n$ coincide (up to normalization) with the Jacobi polynomials $P_n^{(\alpha,-\alpha)}$ orthogonal on the segment $E$ with respect to the positive weight function $f_0$:
\begin{equation*}
\int_E P_n(x)p(x)f_0(x)\,dx=0,\quad p\in\P_{n-1}.
\end{equation*}
In order to simplify notation, let us define also
\begin{equation*}
\myt{f}(z):=2\cos(\alpha\pi) f_0(z),\quad
z\in\Omega:=\C\setminus{F},\quad F:=\overline \R\setminus (-1, 1),  
\end{equation*}
so that
$$
\myt{f}(x)=(f^{+}+ f^{-})(x), \quad x\in (-1,1).
$$
The integrability of $f_0^2$ at the end points of $E$ plus the boundary conditions~\eqref{6} allow us to rewrite  the relation~\eqref{3} as
\begin{equation}
\label{9}
\int_E(Q_{n,1}+Q_{n,2}\myt{f})(x)q(x)f_0(x)\,dx=0,\quad \forall q\in\P_{2n}.
\end{equation}
Since $Q_{n,k}$ have real coefficients and both functions $\myt{f}$ and $f_0$ are positive in
the interval $(-1,1)$, it follows  from~\eqref{9} that the
form 
\begin{equation}
\label{defRhoN}
\rho_n(z):=(Q_{n,1}+Q_{n,2}\myt{f})(z)
\end{equation}
has at least $2n+1$ zeros in the interval $(-1,1)$, that we denote by
$x_{n,j}$. Let 
$$
\omega_{2n+1}(z):=\prod\limits_{j=1}^{2n+1}(z-x_{n,j}).
$$ 
Since  for each  $p\in\P_{n-1}$ function
$\rho_n p/\omega_{2n+1}$ is holomorphic
 in the domain $\Omega$,  by the Cauchy formula we obtain
\begin{align}
\frac{Q_{n,1}+Q_{n,2}\myt{f}}{\omega_{2n+1}}(z)
&=\frac1{2\pi i}\oint_\gamma
\frac{(Q_{n,1}+Q_{n,2}\myt{f})(\zeta)\,d\zeta}{\omega_{2n+1}(\zeta)(\zeta-z)} =\frac1{2\pi i}\oint_\gamma
\frac{(Q_{n,2}\myt{f})(\zeta)\,d\zeta}{\omega_{2n+1}(\zeta)(\zeta-z)},
\label{10}
\end{align}
where $\gamma$ is an arbitrary simple analytic contour that is contained
in the domain $\Omega$ and that contains all the points $x_{n,j}$,
$j=1,\dots,2n+1$, and $z$ inside. Also
\begin{equation}
\label{11}
0=\frac1{2\pi i}\oint_\gamma
\frac{(Q_{n,1}+Q_{n,2}\myt{f})(\zeta)p(\zeta)\,d\zeta}{\omega_{2n+1}(\zeta)}
=\frac1{2\pi i}\oint_\gamma
\frac{(Q_{n,2}\myt{f})(\zeta)p(\zeta)\,d\zeta}{\omega_{2n+1}(\zeta)},
\end{equation}
 It is easy to see that we can
transform the contour $\gamma$ in such a way that the
relation~\eqref{11} takes the form
\begin{equation}
\label{12}
0=\int_F\frac{Q_{n,2}(y)p(y)\Delta\myt{f}(y)\,dy}{\omega_{2n+1}(y)},\quad \Delta\myt{f}(y)= \myt{f}^{+}(y) -\myt{f}^{-}(y), \quad
p\in\P_{n-1}.
\end{equation}
Since
\begin{equation*}
\myt{f}^{\pm}(y)=e^{\mp\alpha\pi i}2\cos(\alpha\pi) \left|\frac{1-y}{1+y}\right|^\alpha,
\quad y\in F^0:=[-\infty,-1)\cup(1,\infty],
\end{equation*}
we obtain that $\Delta\myt{f}(y)=-4i\sin(2\alpha\pi)|(y-1)/(y+1)|^\alpha$,
$y\in F^0$. Thus the relation~\eqref{12} may be written in an equivalent form
as
\begin{equation}
\label{14}
0=\int_F Q_{n,2}(y)p(y)h_{n,\alpha}(y)\,dy,\quad p\in\P_{n-1},
\end{equation}
where the weight function
$h_{n,\alpha}(y)=|(y-1)/(y+1)|^\alpha/\omega_{2n+1}(y)$. Since
$\omega_{2n+1}$ is of degree $2n+1$, function $h_{n,\alpha}$ is
negative on $(-\infty,-1)$ and positive on $(1,\infty)$, and it
follows immediately from~\eqref{14} that real polynomial
$Q_{n,2}$ has at least $n-1$ zeros on $F^0$. 

If $\mdeg{Q_{n,2}}=n-1$, it shows that 
all  its zeros  are contained in the $F^0$. Let
$\mdeg{Q_{n,2}}=n$. Since $Q_{n,2}$ is a real polynomial with at least $n-1$ real zeros, we conclude that all zeros of $Q_{n,2}$ are real, and at most one of them in $E$. Assuming the existence of one zero in $E$ leads into contradiction with the  orthogonality
relation~\eqref{14}. Thus, once again we conclude that all zeros of $Q_{n,2}$ are in $F^0$.

Since same arguments can be applied to  the function $(Q_{n,1}/\myt{f}+Q_{n,2})p/\omega_{2n+1}$, it immediately follows that we arrive at the same conclusion about the zeros of  the polynomial $Q_{n,1}$. Finally, we can
divide both sides of~\eqref{HPfor2and2} by $f^{2}$ to obtain that
$Q_{n,0}(z;f)=Q_{n,2}(z;f^{-2})$.

Hence, we have proved that all zeros of
polynomials $Q_{n,0}$, $Q_{n,1}$, and $Q_{n,2}$ are contained in $F^0$.

Let us remark also that by~\eqref{10}, \eqref{11} and~\eqref{12},
\begin{align}
(Q_{n,1}+Q_{n,2}\myt{f})(z)
&=\omega_{2n+1}(z)\frac1{2\pi i}\oint_\gamma\frac{Q_{n,2}(\zeta)\myt{f}(\zeta)}
{\omega_{2n+1}(\zeta)(\zeta-z)}d\zeta\notag\\
&=\frac{\omega_{2n+1}(z)}{Q_{n,2}(z)}\frac1{2\pi i}\oint_\gamma\frac{Q_{n,2}^2(\zeta)\myt{f}(\zeta)}
{\omega_{2n+1}(\zeta)(\zeta-z)}d\zeta\notag\\
&=\frac{\omega_{2n+1}(z)}{Q_{n,2}(z)}
\int_F\frac{Q_{n,2}^2(y)\Delta\myt{f}(y)}{\omega_{2n+1}(y)(y-z)}dy,\quad
z\in\Omega.
\label{15}
\end{align}

Now we turn to the asymptotics. 
Divide the equation~\eqref{1.9} through by $n^3 w$ and rewrite it in terms of $h_n=w'/(n w)$:
\begin{equation*}
\begin{split}
& (z^2 - 1)^2\left(\frac{1}{n^3}h_n'' + \frac{3}{n} h_n h_n'+h_n^3\right)  + \frac{6}{n}(z^2 - 1)(z -\alpha )\left(\frac{1}{n}h_n'+h_n^2\right) \\  &
- \left[\frac{3(n-1)(n+2)}{n^2}z^2
+\frac{12\alpha z}{n^2}-\frac{3n(n+1)+8\alpha^2 -10}{n^2}\right]h_n \\
& +2\frac{n(n^2 -1)z+\alpha (3n(n+1)-8)}{n^3}=0.
\end{split}
\end{equation*}
Since $w=Q_{n,0}$ is a solution of~\eqref{1.9}, the corresponding $h_n$ takes the form
$$
h_n(z)= C^{\mu_n}(z) = \int \frac{d\mu_n(t)}{z-t} ,
$$
where $\mu_n=\mu(Q_{n,0})$ is the normalized zero-counting measure of $ Q_{n,0}$.

Now we take limit along any convergent subsequence\footnote{At this point we
should assume it convergent in the extended complex plane $\C\cup
\{\infty\}$.}, with $\mu_n\stackrel{*}{\to} \nu$, so that $h_n \to h$, $z\notin
\R$, and get that $h$ satisfies the cubic equation
\begin{equation} \label{cubic}
(z^2 - 1)^2 h^3(z)  - 3(z^2 -1) h(z)  +2z=0.
\end{equation}
We apply the standard Cardano formula to find its solution. This yields
the following result:
\begin{lemma}\label{lemma1}
The general solution of the equation
\begin{equation} \label{cubicGeneral}
(z^2 - 1)^2 y^3(z)  - 3(z^2 -1) y(z)  +2z=0
\end{equation}
for $z\in \C\setminus\{-1,1 \}$ is
\begin{equation} \label{generalFormula}
y(z)= \frac{1}{z+1}\, Y(z) +   \frac{1}{z-1}\, \frac{1}{Y(z)},
\end{equation}
where
\begin{equation} \label{defF}
Y(z) = \left(\frac{1+z}{1-z}\right)^{1/3}.
\end{equation}
In a neighborhood of infinity there are three holomorphic and linearly independent solutions $y_j$, $j=1, 2, 3$, that can be enumerated in such a way that they satisfy
\begin{equation}\label{atinfinity}
y_1(z) + y_2(z)+ y_3(z) =0, \quad 
\begin{cases}
y_1(z)=\dfrac{1}{z} +\mathcal O\left(\dfrac{1}{z^2}\right), &   \\[3mm]
y_2(z)=\dfrac{1}{z} +\mathcal O\left(\dfrac{1}{z^2}\right), & z\to \infty. \\[3mm]
y_3(z)=-\dfrac{2}{z} +\mathcal O\left(\dfrac{1}{z^2}\right), &  
\end{cases}
\end{equation}

\end{lemma}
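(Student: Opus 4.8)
The plan is to solve the cubic \eqref{cubicGeneral} explicitly and then verify that the closed form \eqref{generalFormula} yields the correct three branches at infinity. First I would treat \eqref{cubicGeneral} as a depressed cubic in $y$ with the ``coefficient'' of $y^2$ already absent, so Cardano applies directly. Dividing through by $(z^2-1)^2$ gives
\begin{equation*}
y^3 - \frac{3}{z^2-1}\,y + \frac{2z}{(z^2-1)^2}=0,
\end{equation*}
so in the notation $y^3+py+q=0$ we have $p=-3/(z^2-1)$ and $q=2z/(z^2-1)^2$. The Cardano discriminant combination is the natural thing to compute: one checks that
\begin{equation*}
\left(\frac{q}{2}\right)^2+\left(\frac{p}{3}\right)^3
=\frac{z^2}{(z^2-1)^4}-\frac{1}{(z^2-1)^3}
=\frac{z^2-(z^2-1)}{(z^2-1)^4}=\frac{1}{(z^2-1)^4},
\end{equation*}
which is a perfect square. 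I expect this to be the computational heart of the argument: the miraculous simplification of the discriminant to $1/(z^2-1)^4$ is exactly what makes the cube roots in Cardano's formula rational in the variable $Y(z)=((1+z)/(1-z))^{1/3}$, and hence yields the clean expression \eqref{generalFormula}. Extracting the square root as $1/(z^2-1)^2$, the two Cardano cube-root radicands become $-q/2\pm 1/(z^2-1)^2$, i.e.\ $(-z\pm 1)/(z^2-1)^2$, which factor as $-1/((z\mp1)(z\pm1))\cdot$ the appropriate linear factor; taking cube roots then reproduces the two summands $Y(z)/(z+1)$ and $1/((z-1)Y(z))$.

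Next I would verify directly, rather than re-deriving, that the function in \eqref{generalFormula} actually solves \eqref{cubicGeneral}. Writing $u=Y/(z+1)$ and $v=1/((z-1)Y)$, one computes $u^3=1/((z-1)(z+1)^2)$, $v^3=1/((z+1)(z-1)^2)$, and the product $uv=1/(z^2-1)$. Then $y=u+v$ satisfies the reduced cubic because $u^3+v^3=2z/(z^2-1)^2=-q$ and $uv=-p/3$, so $y^3=u^3+v^3+3uv(u+v)=-q-p\,y$, which is precisely $y^3+py+q=0$. This algebraic identity is routine once the three displayed monomial identities for $u^3$, $v^3$, $uv$ are in hand, and it confirms \eqref{generalFormula} without appeal to the multivaluedness bookkeeping of Cardano.

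Finally I would analyze the behavior at infinity to obtain the three linearly independent germs and their normalization \eqref{atinfinity}. The three solutions correspond to the three determinations of the cube root in $Y(z)$, namely $Y$, $\omega Y$, $\omega^2 Y$ with $\omega=e^{2\pi i/3}$; substituting each into \eqref{generalFormula} gives $y_j$. To read off the leading terms I would expand $Y(z)=((1+z)/(1-z))^{1/3}$ near $z=\infty$, where $(1+z)/(1-z)=-1-2/z+\mathcal O(1/z^2)$, so each branch of $Y$ tends to a cube root of $-1$ and $y_j(z)=u+v$ has a $1/z$ expansion whose leading coefficient is determined by that root. A short computation shows the three leading coefficients are $1,1,-2$ in some labeling, matching \eqref{atinfinity}; the vanishing of the sum $y_1+y_2+y_3$ follows either from this expansion or, more cleanly, from the absence of a $y^2$ term in \eqref{cubicGeneral}, which forces the sum of the three roots to be identically zero by Vieta. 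The main care needed here is the consistent choice of branch of $Y$ near infinity so that the three germs are genuinely holomorphic and single-valued there and the labeling in \eqref{atinfinity} is achievable; I would fix the branch of $Y$ by its value at a convenient base point and track the three cube-root sheets accordingly.
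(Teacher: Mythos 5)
Your plan is correct and essentially the paper's own argument made explicit: the paper likewise derives \eqref{generalFormula} from the standard Cardano formula, proves it by direct substitution (regardless of the branch of $Y$), checks that different branch selections yield distinct roots, and dismisses the expansions \eqref{atinfinity} as ``totally standard arguments.'' One minor caveat: your intermediate identities carry sign slips --- in fact $u^3=\frac{1}{(1-z)(1+z)^2}=-\frac{1}{(z-1)(z+1)^2}$, $v^3=-\frac{1}{(z-1)^2(z+1)}$, hence $u^3+v^3=-\frac{2z}{(z^2-1)^2}=-q$ (your displayed value $+\frac{2z}{(z^2-1)^2}$ equals $q$, not $-q$) --- but the two slips cancel, so the key identity $y^3=u^3+v^3+3uv(u+v)=-q-py$ and the conclusion are unaffected.
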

\begin{proof}
A direct substitution of the right hand side in \eqref{generalFormula} into
\eqref{cubicGeneral} yields that $y(z)$ given by \eqref{generalFormula}  is
indeed a solution of this cubic equation, regardless the branch of $Y$ considered.

Furthermore, let $y_1(z)$ and $y_2(z)$, $z\neq \pm 1$, be two values of
\eqref{generalFormula} corresponding to two different selections of the branch of $Y$. Then, without loss of generality, we can take the value of $Y(z)$ in such a way that
$$
y_1(z)= \frac{1}{z+1}\, Y(z) +   \frac{1}{z-1}\, \frac{1}{Y(z)}, \quad y_2(z)= \frac{e^{2\pi i/3}}{z+1}\, Y(z) +   \frac{e^{-2\pi i/3}}{z-1}\, \frac{1}{Y(z)}.
$$
Assuming that $y_1(z)=y_2(z)$, straightforward calculations yield us into a contradiction. Finally, relations \eqref{atinfinity} are obtained by totally standard arguments. The lemma is proved.
\end{proof}

Observe that the remainder function $R_n$, defined in \eqref{HPfor2and2}, is is holomorphic  in
$D:=\myo\C\setminus{E}$ and is a solution of the differential
equation~\eqref{1.9}; in consequence, its analytic jump $w_n=\Delta R_n$
on $E^0=(-1,1)$ is also a solution  of the
same differential equation.  But
$$
w_n(x)=\rho_n(x) \Delta f(x), \quad x\in (-1,1),
$$  
where $\rho_n$ was defined in \eqref{defRhoN} and $\Delta f$ is given by \eqref{defDeltaF}. 
Standard arguments show that for $z\notin F$,
$$
\frac1n\biggl\{\int_F\frac{Q_{n,2}^2(y)\Delta\myt{f}(y)\,dy}{\omega_{2n+1}(y)(y-z)^2}
\biggr\}
\biggl\{\int_F\frac{Q_{n,2}^2(y)\Delta\myt{f}(y)\,dy}{\omega_{2n+1}(y)(y-z)}
\biggr\}^{-1}\to0,
\quad n\to\infty,
$$
and we conclude from~\eqref{15} that
\begin{align}
\frac1n\frac{w_n'}{w_n}
&=\frac1n\frac{\rho_n'}{\rho_n}+\frac1n\frac{(\Delta f)'}{\Delta f}
=\frac1n\frac{\omega_{2n+1}'}{\omega_{2n+1}}(z)-\frac1n\frac{Q_{n,2}'}{Q_{n,2}}(z)+o(1). \notag
\end{align}
Using the result of Theorem~\ref{t2}, we conclude that
$$
\lim_{n} \frac1n\frac{w_n'}{w_n}= 2C^{\lambda}(z)- C^{\nu}(z), \quad z\in \C\setminus \R.
$$

From these considerations it follows that $C^\nu$ and $2C^{\lambda} - C^{\nu} $ are two, clearly linearly independent solutions of the algebraic equation \eqref{cubicGeneral}. We already know that
$$
C^\nu(z)=y_1(z)=\frac{1}{z} + \mathcal O\left(\frac{1}{z^2} \right), \quad z \to \infty.
$$
Taking into account \eqref{atinfinity} we conclude that
$$
(2C^{\lambda} - C^{\nu} )(z)=y_2(z)= \frac{1}{z} + \mathcal O\left(\frac{1}{z^2} \right) , \quad C^\lambda(z)=-\frac{1}{2}y_3(z)=\frac{1}{z} + \mathcal O\left(\frac{1}{z^2} \right),
$$
which proves in particular that $\lambda$ is a probability measure supported on $E$.

Since we know the support of both $\lambda$ and $\nu$, we can recover 
their densities using the Sokhotskii-Plemelj formulas,
\begin{align} \label{sokhotski}
\nu'(z)& =-\frac{1}{2\pi i}\left(y_{1+}(x)-y_{1-}(x)\right) ,
\quad z\in\supp(\nu)=F=\overline \R\setminus(-1,1), \\
\lambda'(z)& =\frac{1}{4\pi i}\left(y_{3+}(x)-y_{3-}(x)\right)  ,
\quad z\in\supp(\nu)=E=[-1,1]. \label{sokhotski1}
\end{align}

Let $Y$ now denotes the holomorphic branch of \eqref{defF}  in $\C\setminus
F$, determined by $Y(0)=1$. With this convention,
\begin{equation*}
\lim_{x\to \pm \infty} Y(i x)=e^{\pm \pi i/3}.
\end{equation*}

Since $y_1(x)=C^\nu(x)$ and $\supp(\nu)=\overline \R\setminus (-1,1)$, it follows that $y_1(x)\in \R$ for $x\in (-1,1)$. This allows us to single out the expression for $y_1$. Expanding at infinity we obtain the expression for all three solutions in $\C\setminus \R$:
\begin{align}
\label{expressionH}
y_1(z)& = C^\nu(z)=\frac{1}{z+1}Y(z)+\frac{1}{z-1}\frac{1}{Y(z)},
  \\
y_2(z)& = (2C^{\lambda} - C^{\nu} )(z)=-y_1(z)-y_3(z),
  \\ \label{expressionH3}
y_3(z)& = -2C^\lambda(z)=\begin{cases} 
\dfrac{e^{2\pi i/3}}{z+1}Y(z)+\dfrac{e^{-2\pi i/3}}{z-1}\dfrac{1}{Y(z)}, & \Im z > 0, \\[3mm]
\dfrac{e^{-2\pi i/3}}{z+1}Y(z)+\dfrac{e^{2\pi i/3}}{z-1}\dfrac{1}{Y(z)}, & \Im z < 0.
\end{cases}
\end{align}
Let $Y_+$ (reps., $Y_-$) denote the boundary values of the selected branch of $Y$ on $(1,+ \infty)$ from the upper (resp., lower) half plane\footnote{Calculations for $(-\infty, -1)$ are similar.}. Then
$$
Y_\pm (x) =e^{\pm \pi i/3} \sqrt[3]{\frac{x+1}{x-1}}, \quad x>1,
$$
where we take the positive values of $\sqrt[3]{\cdot}$, so that
\begin{align*}
\Delta Y(x)& =Y_+(x)- Y_-(x)= i\sqrt{3} \sqrt[3]{\frac{x+1}{x-1}},
\\ 
\Delta \frac{1}{Y}(x)& =\frac{1}{Y_+(x)}- \frac{1}{Y_-(x)}= -i\sqrt{3} \sqrt[3]{\frac{x-1}{x+1}}.
\end{align*}
Using \eqref{sokhotski} and \eqref{expressionH} we see that
\begin{align*}
\nu'(z)  &=-\frac{1}{2\pi i}\left( \frac{1}{x+1}\Delta Y(x)+\frac{1}{x-1}\Delta \frac{1}{Y}(x) \right) \\
&= -\frac{\sqrt{3}}{2\pi  }\left( \frac{1}{x+1} \sqrt[3]{\frac{x+1}{x-1}}-\frac{1}{x-1}  \sqrt[3]{\frac{x-1}{x+1}} \right),
\end{align*}
which establishes \eqref{1.91}. 

On the other hand, for $x\in (-1,1)$, by \eqref{sokhotski1} and \eqref{expressionH3},
\begin{align*}
\lambda'(z)  &=\frac{\sin (2\pi /3)}{2\pi}\left( \frac{1}{x+1} Y(x)-\frac{1}{x-1} \frac{1}{Y(x)} \right) \\
&= \frac{\sqrt{3}}{4\pi  }\left( \frac{1}{x+1} \sqrt[3]{\frac{1+x}{1-x}}+\frac{1}{1-x}  \sqrt[3]{\frac{1-x}{1+x}} \right),
\end{align*}
where we again take the positive values of $\sqrt[3]{\cdot}$, and \eqref{er2} follows.

\subsection{Proof of Theorem~\ref{t4}}\label{s6}

By~\eqref{15},
\begin{equation}
\label{e22}
\frac{Q_{n,1}}{Q_{n,2}}(z)+\myt{f}(z)= \frac{\rho_n}{Q_{n,2}}(z)
=\frac{\omega_{2n+1}(z)}{Q^2_{n,2}(z)}
\int_F\frac{Q^2_{n,2}(y)\Delta\myt{f}(y)\,dy}{\omega_{2n+1}(y)(y-z)},
\end{equation}
with $\rho_n$ defined in \eqref{defRhoN}. From Theorem~\ref{t2} it follows that
\begin{equation}
\label{e23}
\frac1n\frac{\rho_n'}{\rho_n}-\frac1n\frac{Q_{n,2}'}{Q_{n,2}}\to
2(C^{\lambda} - C^{\nu} )(z), \quad z\in \C\setminus \R.
\end{equation}
In the previous section we have established that
$$
y_1(z)=C^\nu(z), \quad   y_2(z)= (2C^{\lambda} - C^{\nu} )(z), \quad y_3(z)=-2C^{\lambda} (z)
$$
are three independent holomorphic solutions of the algebraic equation \eqref{cubicGeneral} in $\C\setminus \R$. With this notation we conclude from ~\eqref{e23} that
\begin{equation*}
\frac1n\log\frac{\rho_n}{Q_{n,2}}(z)\to \int^z\{y_2(\zeta)-y_1(\zeta)\}\,d\zeta,
\quad z\not\in\R,
\end{equation*}
or equivalently,  
\begin{equation}
\label{e25}
\biggl|\frac{\rho_n}{Q_{n,2}}(z)\biggr|^{1/n}\to
\exp\biggl\{ \Re\int^z(y_2-y_1)(\zeta)\,d\zeta \biggr\}
\end{equation}
From~\eqref{e22} and~\eqref{e25} it follows  that \eqref{4.4} is established if we  prove that   
\begin{equation}
\label{trajectories}
\Re\int^z(y_2-y_1)(\zeta)\,d\zeta<0, \quad z\not\in\R.
\end{equation}

Let us consider the three sheeted Riemann surface $\RS_3$ of genus $0$ given by the equation
$w^3=(z-1)/(z+1)$. It can be realized as shown on Figure~\ref{fig:RS}.

\begin{figure}[htb]
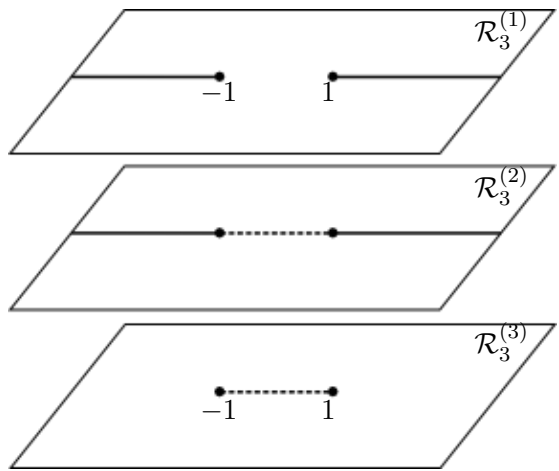

\centering \begin{overpic}[scale=0.9]%
{R3}%
 \put(35,67){$-1 $}
 \put(57,67){$1 $}
  \put(35,9){$-1 $}
 \put(57,9){$1 $}
 \put(85,78){$\mathcal R_3^{(1)} $}
 \put(85,49.5){$\mathcal R_3^{(2)} $}
 \put(85,21){$\mathcal R_3^{(3)} $}
\end{overpic}
\caption{Sheet structure of the Riemann surface $\RS_3$.}
\label{fig:RS}
\end{figure}
 
A general approach for establishing \eqref{trajectories} is through the analysis of the global structure of the critical trajectories of the  quadratic differential $\mathfrak F(z)dz^2$ on $\RS_3$, where
$$
\mathfrak F(z) =\begin{cases}
-\left( y_2-y_3\right)^2(z), & \text{if } z \in \RS_3^{(1)}, \\
-\left( y_1-y_3\right)^2(z), & \text{if } z \in \RS_3^{(2)}, \\
-\left( y_1-y_2\right)^2(z), & \text{if } z \in \RS_3^{(3)}
\end{cases}
$$
is meromorphic on $\RS_3$. Here again we use the notation  $ \pi :\, \RS_3\to\overline\C$ for the canonical projection of $\RS_3$, and $  \pi^{-1}(z) = \{z^{(1)}, z^{(2)}, z^{(3)}\}$  for $z\in \overline \C$, with $z^{(j)}\in \RS_3^{(j)}$.

However, in the particular case under consideration we can use a result of J.~Nuttall; following Nuttall's approach, it is sufficient to show that the three functions $y_j(z)$
give the canonical (in Nuttall's sense, see~\cite[sec.
4.3.4]{MR769985},~\cite[\S 6]{MR3137137}) partition of $\RS_3$ into three
sheets in the following way:
\begin{equation}
\label{e21}
\Re \int^z y_3(z)\,dz<\Re \int^z y_2(z)\,dz<\Re \int^z y_1(z)\,dz,\quad z\not\in\R.
\end{equation}
Alternatively, we need an abelian integral $\phi$ on $\RS_3$ such that
\begin{itemize}
\item $\Re (\phi)$ is single valued on $\RS_3$ and harmonic on $\RS_3 \setminus \cup_{j=1}^3 \infty^{(j)}$;
\item $\exp(\phi(z))$ is meromorphic on $\RS_3$, whose divisor on $\RS_3$ is $2 \infty^{(3)}-  \infty^{(1)}-   \infty^{(2)}$.
\end{itemize} 
With such a  $\phi$,  Nuttall's canonical partition~\eqref{e21}  of
$\RS_3$ into three sheets is given by 
\begin{equation*}
\Re\phi(z^{(3)})<\Re\phi(z^{(2)})<\Re\phi(z^{(1)}).
\end{equation*}

The key fact we can exploit is that the algebraic curve defined by \eqref{cubicGeneral} is independent of $\alpha$, so we can set  $\alpha=1/3$ and use the Nuttall's example~\cite[Section 4.3.4]{MR769985}, where he
considered the Riemann surface $\RS_3'$ of the equation $zw^3=z-1$, along with the Hermite--Pad\'e approximants to  the system  $\bm f=(1,f,f^2)$ with $f(z)=w$. Nuttall showed that $\exp(\phi(z))=z(1-w)^3$, as well as proved that\footnote{The numeration of the sheets of $\RS_3'$ in \cite{MR769985} is slightly different.}
\begin{equation}
\label{e30}
\Re\phi(z^{(2)})-\Re\phi(z^{(1)})<0, \quad z\notin\R. 
\end{equation}
 
Clearly, the Riemann surfaces $\RS_3$ and $\RS_3'$ are isomorphic in such a way that  \eqref{e30} implies \eqref{trajectories}.

Finally, recall that  identity $f(z;-\alpha)=1/f(z;\alpha)$ implies, by 
dividing both sides of the definition~\eqref{1.1N} by $f^2$, that 
$Q^{(\alpha,-\alpha)}_{n,2}=Q^{(-\alpha,\alpha)}_{n,0}$. Hence, \eqref{4.4} applied to $f(z;-\alpha)$ yields 
$$
\frac{Q_{n,1}}{Q_{n,0}}(z)\longrightarrow
-2\cos(\alpha\pi) / f_0(z),
$$
which allows to conclude \eqref{4.5}. The theorem is proved.

\def\cprime{$'$}

\obeylines
\texttt{
A. Mart\'{\i}nez-Finkelshtein (andrei@ual.es)
Department of Mathematics
University of Almer\'{\i}a, Spain, and
Instituto Carlos I de F\'{\i}sica Te\'{o}rica y Computacional
Granada University, Spain
\medskip
Evguenii A.~Rakhmanov (rakhmano@mail.usf.edu)
Department of Mathematics,
University of South Florida, USA
\medskip
Sergey P.~Suetin (suetin@mi.ras.ru)
Steklov Institute of Mathematics of the 
Russian Academy of Sciences, Russia
}

\end{document}